% file turbITPFI-Dec16.tex

\documentclass[11pt]{amsart}

% Packages
\usepackage{amssymb}
\usepackage{enumerate}
\usepackage{mathrsfs}
\usepackage{relsize}
% \usepackage{txfonts}

% Line spacing

% straighten out mod

\makeatletter
\def\imod#1{\allowbreak\mkern10mu({\operator@font mod}\,\,#1)}
\makeatother

% definitions

\newtheorem{theorem}{Theorem}[section]

\newtheorem{prop}[theorem]{Proposition}
\newtheorem{lemma}[theorem]{Lemma}

\theoremstyle{definition}

\theoremstyle{remark}
\newtheorem{remark}[theorem]{Remark}
\newtheorem{remarks}[theorem]{Remarks}
\theoremstyle{remark}

\numberwithin{equation}{section}

% Forced hyphenations

\hyphenation{na-tu-ral}

\hyphenation{e-qui-va-lence}

% Operators

    \DeclareMathOperator{\iso}{iso}

    \DeclareMathOperator{\Mod}{Mod}

     \DeclareMathOperator{\II}{II}
    \DeclareMathOperator{\vN}{vN}
    \DeclareMathOperator{\III}{III}
    \DeclareMathOperator{\I}{I}

    \DeclareMathOperator{\itpf1}{ITPFI}
    
    \DeclareMathOperator{\Tr}{Tr}

% Shorthand math

% Shorthand blackboard bold (and more)
\def\R{{\mathbb R}}
\def\C{{\mathbb C}}

\def\N{{\mathbb N}}
\def\Z{{\mathbb Z}}

\def\Q{{\mathbb Q}}

% Section and paragraph formating:

\begin{document}

\title[Turbulence and Araki-Woods factors]{Turbulence and Araki-Woods factors}

\author{Rom\'an Sasyk}

\address{
Instituto Argentino  de Matem\'aticas-CONICET\\
Saavedra 15, Piso 3 (1083), Buenos Aires, Argentina}

\email{rsasyk@conicet.gov.ar}

\author{Asger T\"ornquist}
\address{Kurt G\"odel Research Center, University of Vienna, W\"ahringer Strasse 25, 1090 Vienna, Austria}
\email{asger@logic.univie.ac.at}

\thanks{\emph{Acknowledgments.} Research for this paper was mainly carried out at the Hausdorff Institute for Mathematics in Bonn during the `Rigidity' programme in the fall semester of 2009. We wish to thank the Hausdorff Institute for kind hospitality and support. A. T\"ornquist was also supported in part through Austrian Science Foundation FWF grant no. P19375-N18 and a Marie Curie grant no. 249167 from the European Union.}

%    General info
\subjclass[2000]{46L36, 03E15}

%\date{}

\keywords{Von Neumann algebras; Classification of factors;
Descriptive set theory; Borel reducibility; Turbulence.}

\maketitle

\begin{abstract}
Using Baire category techniques we prove that Araki-Woods factors
are not classifiable by countable structures. As a result, we obtain
a far reaching strengthening as well as a new proof of the well-known theorem of Woods
that the isomorphism problem for $\itpf1$ factors is not smooth.
We derive as a consequence that the odometer
actions of $\Z$ that preserve the measure class of a finite
non-atomic product measure are not classifiable up to orbit
equivalence by countable structures.
\end{abstract}

\section{Introduction}

The present paper continues a line of research into the structure of
the isomorphism relation for separable von Neumann algebras using
techniques from descriptive set theory, which was initiated in
\cite{sato09a} and \cite{sato09b}.

The central notion from descriptive set theory relevant to this
paper is that of \emph{Borel reducibility}. Recall that if $E$ and
$F$ are equivalence relations on Polish spaces $X$ and $Y$,
respectively, we say that $E$ is \emph{Borel reducible} to $F$ if
there is a Borel function $f:X\to Y$ such that
$$
(\forall x,x'\in X) x E x'\iff f(x) F f(x'),
$$
and if this is the case we write $E\leq_B F$. Borel reducibility is
a notion of relative complexity of equivalence relations and the
isomorphism problems they pose, and the statement $E\leq_B F$ is
interpreted as saying that the points of $X$ are classifiable up to
$E$-equivalence by a Borel assignment of complete invariants that
are $F$-equivalence classes. The requirement that $f$ be Borel is a
natural restriction to ensure that the invariants are assigned in a
reasonably definable way. Without a definability condition on the
function $f$ reducibility would amount only to a consideration of
the cardinality of the quotient spaces $X/E$ and $Y/F$.

In \cite{sato09a} it was shown that the isomorphism relation in all
the natural classes of separable von Neumann factors, $\II_1$,
$\II_\infty$, $\III_\lambda$, $(0\leq\lambda\leq 1)$ do not admit a
classification by countable structures. That is, if $\mathcal L$ is
a countable language and $\Mod(\mathcal L)$ is the natural Polish
space of countable $\mathcal L$-structures (see \cite[\S
2.3]{hjorth00}), then there is no Borel reduction of the isomorphism
relation of von Neumann factors of any fixed type to the isomorphism
relation $\simeq^{\Mod(\mathcal L)}$ in $\Mod(\mathcal L)$. This in
particular implies that there is no Borel assignment of countable
groups, graphs, fields or orderings as complete invariants for the
isomorphism problem for factors.

Recently, Kerr, Li and Pichot in \cite{kelipi} obtained
several non-classification results along the same lines exhibited here but
for the automorphism groups of finite factors. For instance they showed that
the conjugacy relation for the trace-preserving free weakly mixing
actions of discrete groups on a $\II_1$ factor is not classifiable by countable structures.

These types of results are much stronger than the classical
smooth/non-smooth dichotomy, since they give specific information
about the complexity of the kind of invariant that can be used in a
complete classification. They are also stronger than the traditional
smooth/non-smooth dichotomy for equivalence relation, since, for
instance, isomorphism of countable groups is \emph{not smooth}, yet
in many cases countable groups are reasonable invariants.

The earliest non-smoothness result for the isomorphism relation of
von Neumann algebras is Woods' Theorem \cite{woods73}, which asserts
that the isomorphisms relation for $\itpf1_2$ factors is not smooth.
Recall that a von Neumann algebra $M$ is called an Araki-Woods
factor or an $\itpf1$ factor (short for {\it infinite tensor product
of factors of type }$\I$) if it is of the form
$$
M=\bigotimes_{k=1}^\infty (M_{n_k}(\C),\phi_k)
$$
where $M_{n_k}(\C)$ denotes the algebra of $n_k\times n_k$ matrices
and the $\phi_k$ are faithful normal states. In the case when
$n_k=2$ for all $k$, the factor $M$ is called $\itpf1_2$. In this
paper we will show:

\begin{theorem}
The isomorphism relation for $\itpf1_2$ factors is not classifiable
by countable structures.\label{mainthm}
\end{theorem}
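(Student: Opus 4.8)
The strategy is to combine Hjorth's turbulence theory with the classical identification of $\itpf1_2$ factors as Krieger factors of non-singular product-measure odometers. Fix the odometer $T\colon 2^{\N}\to 2^{\N}$ (addition of $1$ with carry to the right), and for $\bar\lambda=(\lambda_k)_k\in(0,1)^{\N}$ let $\mu_{\bar\lambda}=\bigotimes_k\bigl(\lambda_k\delta_0+(1-\lambda_k)\delta_1\bigr)$, a non-atomic Borel probability measure on $2^{\N}$ whose class is $T$-invariant and ergodic, and let $M_{\bar\lambda}=L^\infty(2^{\N},\mu_{\bar\lambda})\rtimes_T\Z$; this is exactly the $\itpf1_2$ factor with eigenvalue list $(\lambda_k,1-\lambda_k)_k$. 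Write $\bar\lambda E_\iso\bar\lambda'$ iff $M_{\bar\lambda}\cong M_{\bar\lambda'}$, an equivalence relation on the Polish space $(0,1)^{\N}$. Since, by Hjorth's turbulence theorem \cite{hjorth00}, the orbit equivalence relation of a turbulent Polish group action is not Borel reducible to isomorphism of countable structures, and since $\leq_B$ is transitive, it is enough to produce a turbulent Polish group action $G\curvearrowright X$ and a Borel map $X\to(0,1)^{\N}$, $x\mapsto\bar\lambda^x$, that is a reduction of the orbit equivalence relation of $G\curvearrowright X$ into $E_\iso$.

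To build the action I would fix a partition $\N=\bigsqcup_nB_n$ with $|B_n|\to\infty$ sufficiently fast, a ``type $\III_0$ template'' $\bar\lambda^{0}$, and parametrise $X$ by sequences $\bar x=(x_n)$ of small real perturbations ranging over a product of intervals, putting $\lambda^x_k=\lambda^{0}_k+x_n$ for $k\in B_n$; the acting group $G$ is the translation group $\{\bar x:\sum_n|B_n|x_n^2<\infty\}$ (a weighted $\ell^2$, or more robustly an $\ell^p$-type group), acting coordinatewise. Following the Araki--Woods constructions of type $\III_0$ $\itpf1_2$ factors, the template and the block lengths are to be chosen so that every $M_{\bar\lambda^x}$ is of type $\III_0$ with flow of weights the flow built under an explicit ceiling function over an odometer base, the ceiling/base data recording $\bar x$. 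Two codes in one $G$-orbit have, by Kakutani's dichotomy for infinite product measures, equivalent measures $\mu_{\bar\lambda^x}\sim\mu_{\bar\lambda^{x'}}$, hence the same non-singular system $(2^{\N},[\mu_{\bar\lambda^x}],T)$ and hence the same factor up to isomorphism: this is the soft half of the reduction. That $G\curvearrowright X$ is turbulent --- dense, meager orbits with the somewhere-dense local-orbit property --- would follow from the classical turbulence of $\ell^p$-translation on a product of intervals, once one verifies that it survives the restriction imposed by the block parametrisation.

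The hard half, and the crux of the argument, is the converse: codes in distinct $G$-orbits must give non-isomorphic factors. The flow of weights is an isomorphism invariant of $M_{\bar\lambda}$ (indeed a complete one here, the $M_{\bar\lambda^x}$ being hyperfinite of type $\III_0$, by Connes--Takesaki and Krieger), so it suffices to show that the flow-of-weights computation above is \emph{faithful modulo $G$}: that the ceiling/base data recovered from the flow pin down $\bar x$ up to an element of $G$. This rests on the Araki--Woods analysis of the modular structure of $\itpf1_2$ factors together with Connes--Woods' identification of the flows of weights of $\itpf1$ factors as precisely the approximately transitive flows, arranged with enough quantitative control in the block construction; extracting a version that is at once Borel, uniform in $\bar x$, and genuinely injective on $X/G$ is where essentially all of the work, and the main obstacle, lies.

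Finally, since the reduction already takes values inside $\itpf1_2\subseteq\itpf1$, and since, through the identification used above, $E_\iso$ translates into orbit equivalence of the measure-class-preserving odometers $(2^{\N},[\mu_{\bar\lambda^x}],T)$, the same construction will simultaneously establish that Araki--Woods factors are not classifiable by countable structures and that these product-measure odometer actions of $\Z$ are not classifiable up to orbit equivalence by countable structures, and in particular recover and sharpen Woods' theorem \cite{woods73} that the $\itpf1_2$ isomorphism relation is not smooth.
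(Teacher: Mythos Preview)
Your architecture --- a turbulent translation action of a weighted $\ell^2$-type group, with the ``soft half'' coming from Kakutani/Araki--Woods --- matches the paper's. The essential difference is that you aim for a full Borel \emph{reduction} $E_G^X\leq_B E_{\iso}$, and you correctly identify the ``hard half'' (distinct $G$-orbits give non-isomorphic factors, via a faithful reading of the flow of weights) as the main obstacle. The paper simply \emph{does not prove this hard half}, and does not need to.

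The key simplification is the following observation: generic $S_\infty$-ergodicity is inherited upward by containment of equivalence relations. So once one has the soft inclusion $E_G^{c_0}\subseteq{\sim_{\iso}}$ (your Kakutani argument, which is the paper's Lemma~3.8 via Proposition~3.7) and the turbulence of $G\curvearrowright c_0$ (the paper's Lemma~2.1, essentially the $\ell^p$-on-$c_0$ turbulence you invoke), the relation $\sim_{\iso}$ is automatically generically $S_\infty$-ergodic. What remains is only to show that each $\sim_{\iso}$-class is \emph{meagre} in $c_0$; this is far weaker than showing $\sim_{\iso}=E_G^{c_0}$, and the paper obtains it from the $T$-set alone (Lemmas~3.3--3.5): every $M_x$ has $T(M_x)\neq\{0\}$, while for each fixed $t\neq 0$ the set $\{x:t\in T(M_x)\}$ is meagre. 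No flow-of-weights computation, no Connes--Woods approximate transitivity, and no injectivity of the parametrisation modulo $G$ is required. Finally, Borelness of $x\mapsto M_x$ gives $\sim_{\iso}\leq_B{\simeq}$, finishing the argument.

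In short: your plan would work if the flow-of-weights faithfulness could be carried out, but that is both the hardest and the unnecessary part. Replacing ``reduction from a turbulent action'' by ``containment of a turbulent action plus meagre classes via the $T$-set'' is exactly the shortcut the paper takes.
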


This solves a problem posed in \cite{sato09b}, and provides a
strengthening and a new proof of Woods' Theorem. It also provides a
new and more direct proof that the isomorphism relation for
injective type $\III_0$ factors is not classifiable by countable
structures, a result proven in \cite{sato09a} using Krieger's
Theorem regarding the duality between flows and injective factors,
\cite{krieger76}.

\medskip

The $\itpf1$ factors constructed in the proof of Theorem
\ref{mainthm} correspond to group-measure space factors constructed
from the measure-class preserving odometer actions of $\Z$ on
$\{0,1\}^\N$, when $\{0,1\}^\N$ is equipped with a finite product
measure. Therefore we obtain the following interesting corollary:

\begin{theorem}
The odometer actions of $\Z$ on $\{0,1\}^\N$ preserving the measure
class of a finite non-atomic ergodic product measure are not classifiable, up to orbit
equivalence, by countable structures.
\end{theorem}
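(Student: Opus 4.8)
The plan is to derive this from Theorem~\ref{mainthm} by means of Krieger's theorem relating orbit equivalence of ergodic non-singular $\Z$-actions to isomorphism of their associated group-measure space factors. For a sequence $\lambda=(\lambda_k)_{k\in\N}$ with each $\lambda_k\in(0,1)$, write $\mu_\lambda=\bigotimes_{k=1}^\infty\nu_{\lambda_k}$ for the product measure on $\{0,1\}^\N$ with $\nu_{\lambda_k}(\{0\})=\lambda_k$, and recall that $\mu_\lambda$ is non-atomic exactly when $\sum_k\min(\lambda_k,1-\lambda_k)=\infty$, whereas the odometer action $\alpha$ of $\Z$ on $\{0,1\}^\N$ is automatically ergodic for $\mu_\lambda$: its orbit equivalence relation agrees, off a $\mu_\lambda$-null set, with the tail equivalence relation, so every invariant set is a tail event and has $\mu_\lambda$-measure $0$ or $1$ by Kolmogorov's zero-one law. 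When $\mu_\lambda$ is non-atomic the action is moreover properly ergodic, since every $\alpha$-orbit is countable and hence $\mu_\lambda$-null; the crossed product $L^\infty(\{0,1\}^\N,\mu_\lambda)\rtimes_\alpha\Z$ is then a factor, and it is classical that it is canonically isomorphic to the $\itpf1_2$ factor $\bigotimes_{k=1}^\infty(M_2(\C),\phi_k)$ in which $\phi_k$ has eigenvalue list $(\lambda_k,1-\lambda_k)$. In particular every $\itpf1_2$ factor arises this way, and the $\itpf1_2$ factors built in the proof of Theorem~\ref{mainthm} are exactly of this form.

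Next I would read off from the proof of Theorem~\ref{mainthm} the Borel reduction it produces, in the shape best suited here: a Polish space $X$ carrying an equivalence relation $E$ that is not classifiable by countable structures (a turbulent one), together with a Borel map $x\mapsto\lambda^x\in\prod_k(0,1)$ for which $\mu_{\lambda^x}$ is non-atomic for every $x$ and $x\mapsto\bigotimes_{k=1}^\infty(M_2(\C),\phi^x_k)$ is a Borel reduction of $E$ to isomorphism of $\itpf1_2$ factors. Using the identification of the previous paragraph, this says precisely that $x\mathrel{E}x'$ if and only if $L^\infty(\{0,1\}^\N,\mu_{\lambda^x})\rtimes_\alpha\Z\cong L^\infty(\{0,1\}^\N,\mu_{\lambda^{x'}})\rtimes_\alpha\Z$.

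Now I would invoke Krieger's theorem \cite{krieger76}: two properly ergodic non-singular $\Z$-actions are orbit equivalent if and only if the associated crossed product factors are isomorphic. Combined with the equivalence displayed above, this gives that $x\mathrel{E}x'$ if and only if the odometer action on $(\{0,1\}^\N,[\mu_{\lambda^x}])$ is orbit equivalent to the odometer action on $(\{0,1\}^\N,[\mu_{\lambda^{x'}}])$. Since the odometer is fixed once and for all, the only varying datum is the measure class, so $x\mapsto\mu_{\lambda^x}$, valued in the Polish space of probability measures on $\{0,1\}^\N$ (within which the $\alpha$-quasi-invariant, non-atomic product measures form a Borel set), is a Borel map, because $x\mapsto\lambda^x$ is Borel and $\lambda\mapsto\mu_\lambda$ is continuous. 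Thus $x\mapsto\mu_{\lambda^x}$ is a Borel reduction of $E$ to orbit equivalence of odometer actions of $\Z$ preserving the measure class of a finite non-atomic ergodic product measure, and since $E$ is not classifiable by countable structures, neither is this orbit equivalence relation.

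The substantive mathematics is entirely imported: the Baire category / turbulence argument behind Theorem~\ref{mainthm}, and Krieger's classification of hyperfinite factors by the associated flow. Accordingly I expect the only real work to be routine verification — pinning down a Polish coding of measure-class-preserving $\Z$-actions, checking Borelness of $x\mapsto([\mu_{\lambda^x}],\alpha)$, and confirming that the parameters $\lambda^x$ furnished by the proof of Theorem~\ref{mainthm} always yield non-atomic, and hence properly ergodic (in particular not of type $\I$), product measures, so that Krieger's theorem applies verbatim.
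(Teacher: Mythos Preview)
Your proposal is correct and follows essentially the same route as the paper: identify the $\itpf1_2$ factors $M_x$ from the proof of Theorem~\ref{mainthm} with the crossed products $L^\infty(\{0,1\}^\N,\mu_{\lambda^x})\rtimes_\alpha\Z$ of odometer actions, and then invoke Krieger's theorem to translate isomorphism of factors into orbit equivalence of the underlying actions. The paper verifies non-atomicity and ergodicity by noting that each $M_x$ is of type $\III$, whereas you argue directly via the summability criterion and Kolmogorov's zero--one law, and you are somewhat more explicit about the Borel parametrization of the measures; but these are expository differences, not substantive ones.
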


This stands in contrast to Dye's Theorem for probability measure
preserving actions, and may be compared with the theorem of Ioana,
Kechris, Tsankov and Epstein in \cite{iokets09} on the
non-classifiability up to orbit equivalence of probability measure
preserving ergodic actions of a countable non-amenable group.

\section{A turbulence lemma}

In this section we establish a general lemma which shows that a wide
class of natural actions are turbulent, in the sense of
\cite{hjorth00}. Recall that if $G$ is a Polish group acting
continuously on a Polish space $X$, then the action is said to be
turbulent if the following holds\footnote{Strictly speaking, Hjorth
required a turbulent action to have dense, meagre orbits, but we
keep those requirements separate.}:

For all $x,y\in X$, all open $U\subseteq X$ with $x\in U$ and all
open $V\subseteq G$ containing the identity, there is $y_0\in U$ in
the $G$-orbit of $y$, such that for all neighbourhoods $U_0$ of
$y_0$ there is a finite sequence $x_i\in U$, $(0\leq i\leq n)$ with
$x_0=x$ and a sequence $g_i\in V$,  $(0\leq i< n)$, such that
$$
x_{i+1}=g_{i}\cdot x_i
$$
and $x_n\in U_0$.

Recall moreover that a {\it Fr\'echet space} is a completely
metrizable locally convex vector space (over $\R$ or $\C$.)

\begin{lemma}\label{turbulence}
Let $F$ be a separable Fr\'echet space and let $G\subseteq F$ be a
dense subgroup of the additive group $(F,+)$. Suppose $(G,+)$ has a
Polish group topology such that the inclusion map $i:G\to F$ is
continuous, and satisfies
\begin{quote}
\begin{enumerate}[$(*)$]
\item for all $g\in G$ and open $V\subseteq G$ with $0\in V$
there is $n\in\N$ such that $\frac 1 n g\in V$
\end{enumerate}
\end{quote}
(e.g. when $G$ itself is a Fr\'echet space.) Then either $G=F$ or
the action of $G$ on $F$ by addition,
$$
g\cdot x=g+x,
$$
is turbulent and has meagre dense classes.

In particular, if $(G,\|\cdot\|_{G})$ and $(F,\|\cdot\|_F)$ are
separable Banach spaces such that $G$ is a dense subspace of $F$ and
the inclusion map $i:G\to F$ is bounded, then either $G=F$ or the
action of $(G,+)$ on $F$ by addition is turbulent and has meagre
dense classes.\label{turbulence}
\end{lemma}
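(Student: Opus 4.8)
The plan is to assume $G\neq F$ and to verify the three assertions in the conclusion: that the orbits of the translation action are dense, that they are meagre, and that the action is turbulent in the sense recalled above. Density is immediate, since each orbit is a coset $x+G$ and $G$ is dense in $F$. For turbulence, fix $x,y\in F$, an open set $U\ni x$, and an open $V\subseteq G$ with $0\in V$. Using that $F$ is locally convex, I would first pass to a convex open set $U'$ with $x\in U'\subseteq U$: the local orbit $\mathcal O(x,U',V)$, that is, the set of endpoints of finite paths $x=x_0,x_1,\dots,x_n$ with each $x_i\in U'$ and each $x_{i+1}-x_i\in V$, is contained in $\mathcal O(x,U,V)$, so it suffices to produce the turbulence witness $y_0$ using $U'$ in place of $U$.

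The heart of the argument is the identity $\mathcal O(x,U',V)=(x+G)\cap U'$. The inclusion $\subseteq$ is clear, since any point of the local orbit is obtained from $x$ by adding finitely many elements of $V\subseteq G$ and is required to lie in $U'$. For $\supseteq$, given $z\in(x+G)\cap U'$ set $g=z-x\in G$; by hypothesis $(*)$ there is $n\in\N$ with $\frac1n g\in V$, and then the points $x_i=x+\frac in g=\bigl(1-\frac in\bigr)x+\frac in z$ for $0\le i\le n$ lie on the segment from $x$ to $z$, hence in $U'$ by convexity, they satisfy $x_{i+1}-x_i=\frac1n g\in V$, and they run from $x$ to $z$; thus $z\in\mathcal O(x,U',V)$. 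Granting this identity, turbulence follows from the density of $G$ in $F$: since $x+G$ and $y+G$ are dense, $\mathcal O(x,U',V)=(x+G)\cap U'$ is dense in the open set $U'$ and $(y+G)\cap U'\neq\emptyset$; choosing $y_0\in(y+G)\cap U'$, we have $y_0\in U'\subseteq U$, $y_0$ lies in the $G$-orbit of $y$, and for every neighbourhood $U_0$ of $y_0$ the nonempty open set $U_0\cap U'$ meets $\mathcal O(x,U',V)$, which supplies a path from $x$ inside $U'$ ending at a point $x_n\in U_0$.

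Finally, I would show $G$ is meagre in $F$, so that each orbit $x+G$, being a translate of $G$, is meagre as well. As $G\subseteq F$ is the injective continuous image of a Polish group, it is Borel in $F$ by the Lusin--Souslin theorem, hence has the Baire property; were it non-meagre, Pettis' theorem would force $G$ to be open, and then clopen since it is a subgroup, whence $G=F$ because $F$ is connected (each $v\in F$ is joined to $0$ by $t\mapsto tv$), contradicting $G\neq F$. The Banach-space statement is then the special case in which $G$ and $F$ are separable Banach spaces, since these are separable Fr\'echet spaces, a bounded linear inclusion is continuous, and $(*)$ holds whenever $G$ is a Fr\'echet space because $\frac1n g\to 0$ in $G$. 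I expect the one genuinely delicate point to be the combination of passing to a convex subneighbourhood with the use of $(*)$ to subdivide a group element into steps small enough to remain inside $U$; once the identity $\mathcal O(x,U',V)=(x+G)\cap U'$ is in hand, both turbulence and meagreness reduce to standard Baire-category facts.
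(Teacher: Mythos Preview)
Your proof is correct and follows essentially the same approach as the paper's: both arguments use that $G$ has the Baire property together with Pettis' theorem to get meagreness of orbits when $G\neq F$, and both verify turbulence by passing to a convex neighbourhood and using condition $(*)$ to subdivide $g=z-x$ into $n$ equal steps lying on the segment $[x,z]$. Your write-up is more explicit than the paper's on a few points---you spell out the passage to a convex $U'\subseteq U$, you formulate and prove the identity $\mathcal O(x,U',V)=(x+G)\cap U'$, and you justify $G\text{ open}\Rightarrow G=F$ via connectedness of $F$---and you invoke Lusin--Souslin to get $G$ Borel where the paper is content with $G$ analytic, but these are refinements rather than a different strategy.
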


\begin{proof}
Note that $G$ is an analytic subset of $F$ since $i:G\to F$ is
continuous, and so it has the Baire Property in $F$ (see \cite[8.21
and 21.6]{kechris95}). So if $G\neq F$ then $G$ must be meagre in
$F$, since otherwise by Pettis' Theorem \cite[9.9]{kechris95} $G$
must contain a neighbourhood of the identity, and so $G=F$. Since $G$
is dense in $F$ it follows that all the $G$-orbits are meagre and
dense. So it suffices to show that the action of $G$ is turbulent.
For this, let $x\in F$ and let $U\subseteq F$ be a convex open
neighbourhood of $F$. Let $y\in U$, let $U_0$ be an open neighbourhood
of $y$ such that $y\in U_0\subseteq U$, and let $V\subseteq G$ be a
neighbourhood of $0$ in $G$. Since $G$ is dense in $F$ we may find
$g\in G$ such that $x+g\in U_0$. By assumption there is $n\in\N$
such that $\frac g n \in V$, and since $U$ is convex we have
$$
x,x+\frac 1 n g, x+ \frac 2 n g,\ldots, x+\frac {n-1} n g, x+g\in U,
$$
which shows that $G$ acts turbulently.
\end{proof}

\begin{remark} Many turbulence results found in the
literature are special instances of the above lemma. For instance,
let $(c_0,\|\cdot\|_\infty)$ denote the real Banach space of real
valued sequences that converge to zero, equipped with the sup-norm.
The elementary example \cite[3.23]{hjorth00} that $c_0$ acts
turbulently on $\R^\N$ by addition fits into this framework.
Moreover, condition $(*)$ may be replaced by the weaker condition
\begin{quote}
\begin{enumerate}[$(**)$]
\item for all $g\in G$ and $W,V\subseteq G$ open neighbourhoods
such that $g\in W$ and $0\in V$, there is $z\in V$ and $n\in\N$ such
that $n z\in W$.
\end{enumerate}
\end{quote}
in which case \cite[Proposition 3.25]{hjorth00} also follows from
the above.

The results in \cite{sofronidis05} also fall into this category.
Indeed, let $X$ be a locally compact  not compact Polish space. The
space $C(X,\R)=\{f:X\to\R \text{ continuous}\}$ is a separable
Fr\'echet space with the topology given by uniform convergence in
compact sets. $C_0(X,\R)=\{f\in C(X,\R): \lim_{x\to\infty} f(x)=0\}$
is a dense subspace of $C(X,\R)$ and it is Polish in the
topology given by uniform convergence. It follows from the previous
lemma that the natural action of $C_0(X,\R)$ on $C(X,\R)$ is
turbulent, has meagre classes and every class is dense. The
exponential map then gives \cite[Theorem 1.1]{sofronidis05}. In a
similar fashion one could also recover \cite[Theorem
1.2]{sofronidis05}.
\end{remark}

\subsection*{$S_\infty$-ergodicity and turbulence} Let $S_\infty$
denote the group of all permutations of $\N$. The importance of the
notion of turbulence comes from its relation to the notion of
\emph{$S_\infty$-ergodicity}. Recall that an equivalence relation
$E$ on a Polish space $X$ is said to be \emph{generically
$S_\infty$-ergodic} if whenever $S_\infty$ acts continuously on a
Polish space $Y$ giving rise to the orbit equivalence relation
$E_{S_\infty}^Y$, and $f:X\to Y$ is a Baire measurable function such
that
$$
(\forall x,x') xE x'\implies f(x) E_{S_\infty}^Y f(x')
$$
(i.e. $f$ is a homomorphism of equivalence relations), then there is
a single $S_\infty$-orbit $[y]_{E_{S_{\infty}}^Y}$ such that
$$
\{x\in X: f(x)\in [y]_{E_{S_{\infty}}^Y}\}
$$
is comeagre. Note that if $E\subseteq E'$, where $E'$ is also an
equivalence relation, and $E$ is generically $S_\infty$-ergodic,
then so is $E'$.

The fundamental Theorem in Hjorth's theory of turbulence is that if
a Polish group $G$ acts continuously and turbulently on a Polish
space $X$, then the associated orbit equivalence relation $E_G^X$ is
generically $S_\infty$-ergodic, see \cite[Theorem 3.18]{hjorth00}.
Since the isomorphism relation $\simeq^{\Mod(\mathcal L)}$ in the
Polish space $\Mod(\mathcal L)$ of countable $\mathcal L$-structures
is induced by a continuous $S_\infty$-action, turbulence provides an
obstruction to Borel reducibility of $E_G^X$ to
$\simeq^{\Mod(\mathcal L)}$ if the $G$-orbits are meagre. The
isomorphism relations for countable groups, graphs, fields,
orderings, etc., are special instances of $\simeq^{\Mod(\mathcal
L)}$ for appropriate choices of the language $\mathcal L$, and so
turbulence can be used to prove the impossibility of obtaining a
complete classification by a reasonable (i.e. Borel or Baire
measurable) assignment of such countable objects as invariants.

\section{Proof of the main theorem}

We will now define a family $(M_x)_{x\in c_0}$ of $\itpf1$ factors
parameterized by elements of $c_0$. The motivation behind the
definition comes from the results in \cite{gioskan}. For $j\in\N$
define $N_j=2^{j!}$, and for each $x\in c_0$ let
$$
l_j^x=\ln(2)j!e^{x(j)/j!}.
$$
Let $\phi_j^x$ be the state on $M_2(\C)$ given by
$$
\phi_j^x(a)=\frac{1}{1+e^{-l_j^x}}\Tr\Big(a\cdot\begin{bmatrix} 1 &
0\\0 & e^{-l_j^x}\end{bmatrix}\Big).
$$
Then we define $M_x$ to be the $\itpf1_2$ factor
$$
M_x=\bigotimes_{j=1}^\infty(M_2(\C),\phi_j^x)^{\otimes N_j}.
$$
In other words, $M_x$ is the $\itpf1_2$ factor with eigenvalue list
$(\lambda_n^x, 1-\lambda_n^x)_{n\in\N}$ where $\lambda_n^x$ is given
by
$$
\lambda_n^x=\frac 1 {1+e^{-l_j^x}}
$$
whenever $\sum_{i=1}^{j-1} N_i< n\leq \sum_{i=1}^{j} N_i$ for some
$j\in\N$. Since $\l_j^x\to \infty$  and $\sum_j
N_je^{-l_j^x}=\infty$, all the factors $M_x$ are of type $\III$,
\cite[III.4.6.6]{blackadar06}.
\medskip

Theorem \ref{mainthm} will be proved by showing that the family of
factors $(M_x)_{x\in c_0}$ is not classifiable up to isomorphism by
countable structures. An outline of the proof is as follows: First
we will show that the equivalence relation
$$
x \sim_{\iso} x'\iff M_x\text{ is isomorphic to } M_{x'}
$$
has meagre classes, thus showing that the family $(M_x)$ contains
uncountably many non-isomorphic factors. Then we will show that
there is a subgroup $G\subseteq c_0$ of the additive group $(c_0,+)$
that satisfies the hypothesis of Lemma \ref{turbulence}, and with
the additional property that
$$
M_{g+x}\simeq M_x
$$
for all $g\in G$ and $x\in c_0$, where $\simeq$ denotes isomorphism
of von Neumann algebras. From this fact it will be easy to deduce
that the equivalence relation $\sim_{\iso}$ is not classifiable by
countable structures. Finally we will show that the map $x\mapsto
M_x$ is Borel (in a precise way) and thus provides a Borel reduction
of $\sim_{\iso}$ to $\simeq$.

\bigskip
The main tool used to distinguish uncountably many
non-isomorphic elements of the family $(M_x)_{x\in c_0}$ is Connes'
invariant $T(M)$.
Recall that if $M$
is a von Neumann algebra with a faithful semifinite normal weight
$\varphi$, the Tomita-Takesaki theory associates to it a one
parameter group of automorphisms of $M$, the so-called {\it modular
automorphism group}. If $\sigma^\varphi_t$ denotes the modular
automorphism group of $(M,\varphi)$, the $T$-set of $M$ is the
additive subgroup of $\R$  defined by
$$
T(M)=\{t\in\R: \sigma^\varphi_t \text{ is an inner automorphism}\}.
$$
Even though $\sigma^\varphi_t$ depends on $\varphi$, Connes'
non-commutative Radon-Nikodym Theorem guarantees that $T(M)$ is
independent of the choice of the faithful semifinite normal weight
$\varphi$. The $T$-set is arguably the most important invariant
employed to distinguish injective type $\III_0$ factors
and it can be found already in Araki and Woods's seminal article \cite{arwo68}.
A thorough treatment of these important concepts that are at the heart
of the structural theory of factors of type $\III$ can be found in
\cite[5.3-5.5]{connesNCG}, \cite[III.3, III.4]{blackadar06} and
\cite{takesakiV2}. For the purpose of this article we will only need
the following Lemma.

\begin{lemma}\cite[Corollaire 1.3.9]{connesthesis}\label{T-set} If $M$ is an $\itpf1_2$ factor with
eigenvalue list $(\lambda_n, 1-\lambda_n)$ then the $T$-set is given by
the formula
$$
T(M)=\{t\in\R:\sum_{n=1}^\infty\big(1-|\lambda_n^{1+it}+(1-\lambda_n)^{1+it}|\big)<\infty\}.
$$
\end{lemma}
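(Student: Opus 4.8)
The plan is to compute $T(M)$ directly from the concrete model, using the definition of the $T$-set via the modular automorphism group. Realise $M$ with its canonical product state $\phi=\bigotimes_n\phi_n$, where $\phi_n$ is the state on $M_2(\C)$ with density matrix $\rho_n=\mathrm{diag}(\lambda_n,1-\lambda_n)$, and write $M_{[1,N)}=\bigotimes_{n<N}(M_2(\C),\phi_n)$ and $M_{[N,\infty)}=\bigotimes_{n\geq N}(M_2(\C),\phi_n)$, so that $M=M_{[1,N)}\mathbin{\overline{\otimes}}M_{[N,\infty)}$. Since the modular group of a tensor product of states is the tensor product of the modular groups, $\sigma^\phi_t=\bigotimes_n\Ad(\rho_n^{it})$; in particular $\sigma^\phi_t\restrict M_{[1,N)}=\Ad(V_N)$ where $V_N=\bigotimes_{n<N}\rho_n^{it}\in M_{[1,N)}$. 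The relevant scalars are $z_n:=\phi_n(\rho_n^{it})=\Tr(\rho_n^{1+it})=\lambda_n^{1+it}+(1-\lambda_n)^{1+it}$, which satisfy $|z_n|\leq 1$. Since $\prod_n(1-a_n)$ with $0\leq a_n<1$ converges to a nonzero limit iff $\sum_n a_n<\infty$, the lemma reduces to showing that $\sigma^\phi_t$ is inner if and only if $\prod_n|z_n|$ converges to a nonzero limit.

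For the ``if'' direction: as $\Ad$ is insensitive to scalar phases, replace $\rho_n^{it}$ by the unitary $v_n:=\overline{z_n/|z_n|}\,\rho_n^{it}$, which has $\phi_n(v_n)=|z_n|\geq 0$, and put $\widetilde V_N=\bigotimes_{n<N}v_n$. Writing $\xi$ for the cyclic GNS vector of $\phi$, one computes $\langle\widetilde V_M\xi,\widetilde V_N\xi\rangle=\prod_{N\leq n<M}|z_n|$ for $N<M$, and similarly for the adjoints; hence if $\sum_n(1-|z_n|)<\infty$ then $(\widetilde V_N\xi)$ and $(\widetilde V_N^*\xi)$ are Cauchy, and the same computation applied to an arbitrary elementary tensor shows that $\widetilde V_N$ converges strongly to a unitary $V\in M$. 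Then $\Ad(V)$ agrees with $\sigma^\phi_t$ on every $M_{[1,N)}$, hence on the $\sigma$-weakly dense subalgebra $\bigcup_N M_{[1,N)}$, hence on $M$ by normality; thus $t\in T(M)$.

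For the ``only if'' direction: suppose $\sigma^\phi_t=\Ad(u)$ for a unitary $u\in M$. Since $\Ad(V_N)$ also implements $\sigma^\phi_t$ on $M_{[1,N)}$, and $M_{[1,N)}$ is a finite type $\I$ factor, the commutation theorem for von Neumann tensor products gives $M_{[1,N)}'\cap M=M_{[N,\infty)}$, so $w_N:=V_N^*u$ is a unitary lying in $M_{[N,\infty)}$. Fix $N_0$ and $a\in M_{[1,N_0)}$. For $N>N_0$, factoring $V_N=V_{N_0}\otimes\bigotimes_{N_0\leq n<N}\rho_n^{it}$ and using that $\phi$ is a product state, one gets
\[
\phi(a^*u)=\phi_{[1,N_0)}\big(a^*V_{N_0}\big)\cdot\Big(\prod_{N_0\leq n<N}z_n\Big)\cdot\phi_{[N,\infty)}(w_N).
\]
Since $|\phi_{[N,\infty)}(w_N)|\leq 1$, letting $N\to\infty$ shows that if $\prod_n|z_n|=0$ then $\phi(a^*u)=0$ for all $a$ in the $\sigma$-weakly dense subalgebra $\bigcup_N M_{[1,N)}$, hence for all $a\in M$; taking $a=u$ yields $1=\phi(u^*u)=0$, a contradiction. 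Therefore $\sum_n(1-|z_n|)<\infty$, as required.

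I expect the ``only if'' direction to be the main obstacle. Its crux is twofold: identifying the relative commutant of the finite-dimensional corner $M_{[1,N)}$ inside $M$ (to split $u=V_Nw_N$ with the ``defect'' $w_N$ supported past $N$), and then testing $u$ not merely against $\phi$ --- on which $u$ may vanish --- but against every functional $a\mapsto\phi(a^*u)$ with $a$ local, exploiting that the $\phi$-weight of $V_N$ decays like $\prod|z_n|$ while $w_N$ escapes to infinity. The ``if'' direction is the more routine observation that a summably convergent infinite product of modular unitaries provides an inner implementation. (The result is due to Connes; the argument sketched above is the standard one.)
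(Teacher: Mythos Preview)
The paper does not supply its own proof of this lemma: it is quoted verbatim as \cite[Corollaire 1.3.9]{connesthesis} and used as a black box. So there is no ``paper's proof'' to compare against; the relevant comparison is with the original argument of Connes, and your sketch is essentially that argument.

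Your proof is correct. Two small remarks on presentation. First, the relative commutant identity $M_{[1,N)}'\cap M=M_{[N,\infty)}$ does not follow from the commutation theorem for tensor products in $B(H)$ alone; what you actually use is that $M_{[1,N)}$ is a \emph{factor}, so that $(A\otimes 1)'\cap(A\mathbin{\overline{\otimes}}B)=Z(A)\mathbin{\overline{\otimes}}B=1\mathbin{\overline{\otimes}}B$. It is worth saying this explicitly. Second, the passage ``hence for all $a\in M$; taking $a=u$'' deserves one more line: the functional $a\mapsto\phi(a^*u)=\langle u\xi,a\xi\rangle$ is strongly continuous, and $\bigcup_N M_{[1,N)}$ is strongly dense in the unit ball of $M$ by Kaplansky, so the vanishing extends to $a=u$; alternatively, cyclicity of $\xi$ gives $u\xi=0$ directly, contradicting $\|u\xi\|=1$. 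With those clarifications the argument is complete.
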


\bigskip

The following slightly abusive notation is convenient in this paper:
For a real $s\in\R$, write $s\imod {2\pi}$ for the unique element of
$$
\{s+2\pi p: p\in\Z\}\cap (-\pi,\pi].
$$
For $x\in c_0$ and $t\in\R$ define
$$
\delta_j^x(t)= tl_j^x\imod {2\pi}.
$$
When the value of $t$ is clear from the context we will usually
write $\delta_j^x$ for $\delta_j^x(t)$.

The next lemma is stated only for the family $(M_x)_{x\in c_0}$, but
is a special case of a well-known consequence of Lemma \ref{T-set}
which has been observed in many places in the literature (see e.g.
\cite{gioskan} and \cite{arwo68}). We include its proof for the sake
of completeness.

\begin{lemma}\label{criteria for T set} For each $t\in\R$, $t\in T(M_x)$ iff
$$
\sum_{j=1}^{\infty}N_je^{-l_j^x}(\delta_j^x(t))^2<\infty.
$$
\end{lemma}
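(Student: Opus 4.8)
The plan is to apply Lemma \ref{T-set} directly and then reduce the resulting series to the claimed one by a string of elementary two-sided estimates. First I would record that, by definition of $M_x$, the eigenvalue list $(\lambda_n^x,1-\lambda_n^x)$ takes the value $\lambda_j^x:=(1+e^{-l_j^x})^{-1}$ exactly $N_j$ times, so the series appearing in Lemma \ref{T-set} can be grouped as
\[
\sum_{n=1}^\infty\big(1-|\lambda_n^{1+it}+(1-\lambda_n)^{1+it}|\big)=\sum_{j=1}^\infty N_j\big(1-|(\lambda_j^x)^{1+it}+(1-\lambda_j^x)^{1+it}|\big).
\]
Hence $t\in T(M_x)$ iff the right-hand series converges, and it remains to compare its general term with $N_je^{-l_j^x}(\delta_j^x(t))^2$.

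Next I would compute the modulus inside the sum. Writing $\mu^{1+it}=\mu e^{it\ln\mu}$ for $\mu>0$ and using $\ln(1-\lambda_j^x)-\ln\lambda_j^x=\ln\frac{1-\lambda_j^x}{\lambda_j^x}=-l_j^x$, the common phase $e^{it\ln\lambda_j^x}$ factors out, so
\[
|(\lambda_j^x)^{1+it}+(1-\lambda_j^x)^{1+it}|=|\lambda_j^x+(1-\lambda_j^x)e^{-itl_j^x}|.
\]
Since $\lambda_j^x+(1-\lambda_j^x)=1$, expanding the square and using $1-\cos\theta=2\sin^2(\theta/2)$ gives
\[
|\lambda_j^x+(1-\lambda_j^x)e^{-itl_j^x}|^2=1-4\lambda_j^x(1-\lambda_j^x)\sin^2(tl_j^x/2),
\]
and since $\sin$ has period $2\pi$ the quantity $tl_j^x$ may be replaced by $tl_j^x\imod{2\pi}=\delta_j^x(t)$ inside $\sin^2$.

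Then I would apply the elementary bounds $\frac{u}{2}\le 1-\sqrt{1-u}\le u$ for $u\in[0,1]$ (to the quantity $u=4\lambda_j^x(1-\lambda_j^x)\sin^2(\delta_j^x(t)/2)\in[0,1]$), together with $|\delta|/\pi\le|\sin(\delta/2)|\le|\delta|/2$ for $\delta\in(-\pi,\pi]$, and the observation that $l_j^x>0$ forces $1<1+e^{-l_j^x}<2$, whence $\frac14 e^{-l_j^x}\le\lambda_j^x(1-\lambda_j^x)\le e^{-l_j^x}$. Chaining these three comparisons shows that the general term of the grouped series is bounded above and below by absolute constant multiples of $N_je^{-l_j^x}(\delta_j^x(t))^2$, so the two series converge or diverge together; combined with Lemma \ref{T-set} this is exactly the statement.

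All of this is routine; the only points that need a little care are the bookkeeping of the multiplicities $N_j$ when passing from the sum over $n$ in Lemma \ref{T-set} to the sum over $j$, and verifying that every comparison constant above can be taken independent of $j$, $x$ and $t$ — in particular that $l_j^x>0$ for every $j$ (which holds because $l_j^x=\ln(2)\,j!\,e^{x(j)/j!}>0$), so that no separate treatment of small $j$ is required. I do not expect any genuine obstacle.
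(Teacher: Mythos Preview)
Your argument is correct and follows the same route as the paper's proof: apply Lemma~\ref{T-set}, group the eigenvalue list into blocks of size $N_j$, factor the modulus to obtain $|\lambda_j^x+(1-\lambda_j^x)e^{-itl_j^x}|$, and then compare the resulting term with $e^{-l_j^x}(\delta_j^x)^2$. The only difference is cosmetic---the paper writes the last comparison via the Taylor expansion $1-\cos\delta=\tfrac{\delta^2}{2}+O(\delta^4)$, whereas you supply explicit two-sided inequalities (for $1-\sqrt{1-u}$, for $\sin(\delta/2)$, and for $\lambda(1-\lambda)$) that make the equivalence hold uniformly in $j$; this is arguably cleaner but not a different idea.
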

\begin{proof}
By Lemma \ref{T-set},
\begin{align*}
t\in T(M_x) &\iff\sum_{j=1}^{\infty}N_j(1-|(\frac{1}{1+e^{-l_j^x}})^{1+it}+(\frac{e^{-l_j^x}}{1+e^{-l_j^x}})^{1+it}|)<\infty\\
&\iff\sum_{j=1}^{\infty}N_j(1-\frac{1}{1+e^{-l_j^x}}|1+e^{-l_j^x}e^{-il_j^xt}|)<\infty\\
&\iff\sum_{j=1}^{\infty}N_j(1-\frac{1}{1+e^{-l_j^x}}|1+e^{-l_j^x}e^{-i\delta_j^x(t)}|)<\infty\\
&\iff\sum_{j=1}^{\infty}N_je^{-l_j^x}(1-\cos(\delta_j^x(t)))<\infty\\
&\iff\sum_{j=1}^{\infty}N_je^{-l_j^x}((\delta_j^x(t))^2+O((\delta_j^x(t))^4))<\infty\\
&\iff\sum_{j=1}^{\infty}N_je^{-l_j^x}(\delta_j^x(t))^2<\infty
\label{sumconverges}
\end{align*}
\end{proof}

\begin{remark}\label{remark-t-set}
 The previous lemma sheds light on the motivation behind the
definition of the family $(M_x)_{x\in c_0}$. Indeed, since
$N_je^{-l_j^x} = 2^{j!(1-e^{x(j)/j!})}$ goes to $1$ when $j\to
\infty$, then to control the sum
$\sum_{j=1}^{\infty}N_je^{-l_j^x}(\delta_j^x(t))^2$, and thus the
$T$-set, it will be enough to control the size of $\delta_j^x(t)$.
This fact is what we will exploit in the next two lemmas.
\end{remark}

\begin{lemma}\label{nonempty}
For each $x\in c_0$,  $T(M_x)\neq \{0\}$.
\end{lemma}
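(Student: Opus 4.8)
The plan is to produce a single nonzero element of $T(M_x)$ by a simultaneous Diophantine approximation (nested interval) argument based on the criterion of Lemma \ref{criteria for T set}. So it suffices to find $t\neq 0$ with $\sum_{j}N_je^{-l_j^x}(\delta_j^x(t))^2<\infty$. First I would observe that
$$
N_je^{-l_j^x}=2^{j!(1-e^{x(j)/j!})}\longrightarrow 1\qquad(j\to\infty),
$$
because $x(j)/j!\to 0$ forces $j!(1-e^{x(j)/j!})=-x(j)+o(1)\to 0$; hence $C_0:=\sup_j N_je^{-l_j^x}<\infty$, and it is enough to arrange $\sum_j(\delta_j^x(t))^2<\infty$.

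The naive guess $t=2\pi/\ln 2$ does not work in general: then $tl_j^x=2\pi j!e^{x(j)/j!}$ differs from $2\pi j!\in 2\pi\Z$ by $2\pi j!(e^{x(j)/j!}-1)=2\pi x(j)(1+o(1))$, so $\delta_j^x(t)\approx 2\pi x(j)$, and $\sum_j x(j)^2$ may diverge for $x\in c_0$. The remedy is to perturb $t$ using the fact that the $l_j^x$ grow fast; the precise estimate needed is
$$
\frac{l_j^x}{l_{j-1}^x}=j\,e^{x(j)/j!-x(j-1)/(j-1)!}\ \ge\ \frac{j-1}{2}\qquad\text{for all large }j,
$$
since the exponent tends to $0$. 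Using this I would build, starting from some large $j_0$, a decreasing sequence of nonempty compact intervals $I_{j_0}\supseteq I_{j_0+1}\supseteq\cdots$ with $I_{j_0}$ a fixed interval avoiding $0$, $|I_j|=\frac{8\pi}{j\,l_j^x}$, and $|sl_j^x\imod{2\pi}|\le \frac{8\pi}{j}$ for every $s\in I_j$. The inductive step is elementary: the set of $s$ with $sl_j^x$ within $\frac{8\pi}{j}$ of $2\pi\Z$ is a union of intervals of length $\frac{16\pi}{j\,l_j^x}$ repeating with period $\frac{2\pi}{l_j^x}$, so whenever $|I_{j-1}|\ge\frac{4\pi}{l_j^x}$ one of these intervals sits inside $I_{j-1}$ and we let $I_j$ be its left half; the hypothesis $|I_{j-1}|\ge\frac{4\pi}{l_j^x}$ holds at $j=j_0+1$ if $j_0$ is chosen large enough, and at later stages exactly because of the growth estimate above (as $|I_{j-1}|=\frac{8\pi}{(j-1)l_{j-1}^x}$).

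Finally, let $t$ be the unique point of $\bigcap_{j\ge j_0}I_j$ (the lengths tend to $0$). Then $t\ne 0$ since $t\in I_{j_0}$, and $|\delta_j^x(t)|\le \frac{8\pi}{j}$ for all $j\ge j_0$, so
$$
\sum_{j=1}^{\infty}N_je^{-l_j^x}(\delta_j^x(t))^2\ \le\ \sum_{j<j_0}N_je^{-l_j^x}(\delta_j^x(t))^2+C_0\sum_{j\ge j_0}\frac{64\pi^2}{j^2}\ <\ \infty,
$$
and $t\in T(M_x)$ by Lemma \ref{criteria for T set}; hence $T(M_x)\ne\{0\}$. The one delicate point is the nested interval construction: the approximation rate forced by the growth of the $l_j^x$ is $\delta_j^x(t)=O(1/j)$, which is just barely square-summable, so there is essentially no slack and $j_0$ must be taken large enough to absorb the several constants ($\frac{8\pi}{j}\le\pi$, $l_{j_0+1}^x$ large, and $l_j^x/l_{j-1}^x\ge\frac{j-1}{2}$).
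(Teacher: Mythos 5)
Your proof is correct. It rests on the same two pillars as the paper's argument: the criterion of Lemma \ref{criteria for T set} together with the observation that $N_je^{-l_j^x}=2^{j!(1-e^{x(j)/j!})}$ stays bounded, so that it suffices to produce $t\neq 0$ with $\delta_j^x(t)=O(1/j)$, and the factorial-type growth $l_j^x/l_{j-1}^x\approx j$, which is exactly what makes that rate attainable. Where you differ is in how $t$ is produced: the paper writes $t$ down explicitly as the recursively defined series $t=\frac{1}{\ln 2}\sum_j a(j)/(j!e^{x(j)/j!})$ with $a(j)\in(0,3\pi]$ chosen so that the first $j$ terms of the resulting expansion of $l_j^xt$ sum to a multiple of $2\pi$, leaving $\delta_j^x(t)$ equal to the tail $\sum_{k>j} j!e^{x(j)/j!}a(k)/(k!e^{x(k)/k!})=O(1/j)$; you instead obtain $t$ non-constructively as the unique point of a nested sequence of compact intervals, with the same growth estimate guaranteeing at each stage that a full ``good'' window for $l_j^x$ fits inside the previous interval. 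The two arguments are quantitatively identical --- the $O(1/j)$ rate is barely square-summable in both --- so neither buys extra generality here; the paper's version has the small advantage of an explicit formula for $t$ and no induction bookkeeping, while yours is a standard simultaneous-Diophantine-approximation scheme that would adapt verbatim to any sequence $(l_j)$ with $l_j/l_{j-1}\to\infty$ fast enough. Your housekeeping points (taking $j_0$ large enough that $8\pi/j\le\pi$, the base-case inequality $|I_{j_0}|\ge 4\pi/l_{j_0+1}^x$, and choosing the initial window away from $0$) are all easily met, so there is no gap.
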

\begin{proof}
Define
$$
t=\frac{1}{\ln 2}\sum_{j=1}^{\infty}\frac{a(j)}{j!e^{x(j)/j!}}
$$
where $a(j)\in (0,3\pi]$ is defined recursively by letting $a(1)=1$
and in general for $j>1$,
$$
a(j)=\left[-\sum_{k=1}^{j-1}j!e^{x(j)/j!}\frac{a(k)}{k!e^{x(k)/k!}}\right]
\imod {2\pi}+2\pi.
$$
Then $0<t<\infty$ and we have
\begin{align*} l_j^xt &=
\sum_{k=1}^{\infty}j!e^{x(j)/j!}\frac{a(k)}{k!e^{x(k)/k!}}\\
&= \sum_{k=1}^{j-1}j!e^{x(j)/j!}\frac{a(k)}{k!e^{x(k)/k!}} +a(j)+
\sum_{k=j+1}^{\infty}j!e^{x(j)/j!}\frac{a(k)}{k!e^{x(k)/k!}}
\end{align*}
and so
$$
\delta_j^x(t)=\sum_{k=j+1}^{\infty}j!e^{x(j)/j!}\frac{a(k)}{k!e^{x(k)/k!}}
\imod {2\pi}.
$$
If  $j$ is large enough that for all $k\geq j$ we have $1/2\leq
e^{x(k)/k!}\leq 2$ then
\begin{align*}
0\leq\sum_{k=j+1}^{\infty}j!e^{x(j)/j!}\frac{a(k)}{k!e^{x(k)/k!}}
 & \leq 12\pi\sum_{k=j+1}^{\infty}\frac{j!}{k!}\\
\tag{$\dag$} & \leq 12\pi\sum_{k=1}^{\infty}\frac{1}{(j+1)^k}\\
 & =\frac {12\pi} {j}
\end{align*}
Hence for $j$ sufficiently large it holds that
$$
\delta_j^x(t)=\sum_{k=j+1}^{\infty}j!e^{x(j)/j!}\frac{a(k)}{k!e^{x(k)/k!}}\sim \frac{1}{j}
$$
and so by $(\dag)$ and Lemma \ref{criteria for T set} we have $t\in
T(M_x)$.
\end{proof}

\begin{lemma}\label{comeagre}
For each $t\in \R\setminus \{0\}$ the set $\{x\in c_0: t\not \in
T(M_x)\}$ is a dense $G_\delta$ subset of $c_0$.
\end{lemma}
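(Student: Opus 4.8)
The plan is to express the condition ``$t\notin T(M_x)$'' as divergence of a series of continuous nonnegative functions of $x$, and then argue by the Baire category theorem in $c_0$. First, by Lemma~\ref{criteria for T set} together with the observation in Remark~\ref{remark-t-set} that $N_je^{-l_j^x}=2^{j!(1-e^{x(j)/j!})}\to 1$ (so that for $a_j\geq 0$ the series $\sum_j N_je^{-l_j^x}a_j$ and $\sum_j a_j$ converge or diverge together), and using $1-\cos\theta\asymp\theta^2$ for $\theta\in(-\pi,\pi]$, one gets for every $x\in c_0$ that
$$
t\notin T(M_x)\iff\sum_{j=1}^\infty g_j(x)=\infty,\qquad g_j(x):=1-\cos\!\big(t\ln(2)\,j!\,e^{x(j)/j!}\big),
$$
where one uses $1-\cos(\delta_j^x(t))=1-\cos(tl_j^x)$. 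Each $g_j$ is continuous on $(c_0,\|\cdot\|_\infty)$, is nonnegative, and depends only on the single coordinate $x(j)$. Consequently
$$
\{x\in c_0:\ t\notin T(M_x)\}=\bigcap_{N\in\N}U_N,\qquad U_N:=\bigcup_{K\in\N}\Big\{x\in c_0:\ \sum_{j=1}^K g_j(x)>N\Big\},
$$
and each $U_N$ is open, being a union of finite sums of continuous functions; in particular the displayed set is $G_\delta$. It therefore suffices to show that each $U_N$ is dense, and for this I will produce, given an arbitrary $x^0\in c_0$ and $\varepsilon>0$, a point $x\in c_0$ with $\|x-x^0\|_\infty<\varepsilon$ and $\sum_j g_j(x)=\infty$ (so $x\in U_N$ for all $N$).

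Put $\eta_j=\min(\varepsilon/2,\,j^{-1/2})$, so that $\eta_j\to 0$ while $\sum_j\eta_j^2=\infty$ (indeed $\eta_j^2=1/j$ for $j\geq 4/\varepsilon^2$). Let $x$ agree with $x^0$ in all coordinates $j<J_0$, where $J_0$ is to be chosen, and for $j\geq J_0$ choose $x(j)$ inside $[\,x^0(j)-\eta_j,\ x^0(j)+\eta_j\,]$ as follows. Since $g_j(x)$ depends on $x$ only through the phase $tl_j^x=t\ln(2)\,j!\,e^{x(j)/j!}$, the essential point is the cancellation
$$
\frac{d}{ds}\Big(t\ln(2)\,j!\,e^{s/j!}\Big)=t\ln(2)\,e^{s/j!}.
$$
Hence, if $J_0$ is large enough that $(\|x^0\|_\infty+\varepsilon)/j!\leq 1$ for all $j\geq J_0$, then as $x(j)$ varies over $[\,x^0(j)-\eta_j,\,x^0(j)+\eta_j\,]$ the phase $tl_j^x$ sweeps out an interval of length at least $2\beta\eta_j$, where $\beta:=|t|\ln(2)/e>0$ (using $e^{s/j!}\geq e^{-1}$ on the relevant range). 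Now there is an absolute constant $c>0$ such that any interval of length $L\leq 1$ contains a point $\theta$ with $1-\cos\theta\geq cL^2$ (take an endpoint of the interval at distance at least $L/2$ from $2\pi\Z$). Since $2\beta\eta_j\leq 1$ for $j$ large, we may therefore choose $x(j)$ with $g_j(x)\geq c\,(2\beta\eta_j)^2=4c\beta^2\eta_j^2$. After enlarging $J_0$ to absorb the finitely many exceptional $j$, we obtain $\|x-x^0\|_\infty\leq\varepsilon/2<\varepsilon$, $x\in c_0$ since $x(j)-x^0(j)\to 0$, and
$$
\sum_{j\geq J_0}g_j(x)\geq 4c\beta^2\sum_{j\geq J_0}\eta_j^2=\infty,
$$
so $x\in U_N$. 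Thus every $U_N$ is dense, and $\bigcap_N U_N=\{x:\ t\notin T(M_x)\}$ is a dense $G_\delta$ subset of $c_0$.

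The genuinely delicate point, which I expect to be the main obstacle, is the tension between the requirement $x\in c_0$ --- which forces the admissible perturbations $\eta_j$ to tend to $0$ and hence shrinks the interval of phases $tl_j^x$ reachable in the $j$-th coordinate --- and the need for $\sum_j g_j(x)$ to diverge all the same. This is exactly why one exploits the cancellation $\frac{d}{ds}(t\ln(2)\,j!\,e^{s/j!})=t\ln(2)\,e^{s/j!}$: the enormous factor $N_j=2^{j!}$ (equivalently, the $j!$ in the exponent defining $l_j^x$) does not obstruct the construction because the length of the reachable phase interval is comparable to $\eta_j$ with a constant independent of $j$; combined with $1-\cos\theta\asymp\theta^2$ for small $\theta$, divergence of $\sum_j g_j(x)$ then reduces to divergence of $\sum_j\eta_j^2$, which is why the borderline rate $\eta_j\asymp j^{-1/2}$ is chosen.
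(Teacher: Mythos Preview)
Your proof is correct and follows essentially the same approach as the paper: both express the set as a $G_\delta$ via the series criterion from Lemma~\ref{criteria for T set}, and both establish density by perturbing the tail coordinates by amounts of order $j^{-1/2}$ so that $|\delta_j^x(t)|\gtrsim j^{-1/2}$, whence the series diverges. The only differences are cosmetic: the paper works directly with $\sum_j N_je^{-l_j^x}(\delta_j^x)^2$ and gives an explicit formula $x(j)=j!\ln\bigl(1+\tfrac{a(j)}{t\ln(2)j!\sqrt{j}}\bigr)$ with a binary switch $a(j)\in\{0,1\}$, whereas you first pass to the equivalent series $\sum_j(1-\cos(tl_j^x))$ and then use a soft existence argument based on the length of the reachable phase interval.
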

\begin{proof}
Since $T(M_x)$ is a subgroup of $(\R,+)$, we may assume that $t>0$.
For each $K\in\N$ let
$$
A_K=\{x\in c_0: (\exists L\in\N)\ \sum_{j=1}^L N_j
e^{-l_j^x}(\delta_j^x)^2>K\}.
$$
The set $A_K$ is open since for each $j\in\N$ the function
$x\mapsto(\delta^x_j)^2$ is continuous. By Lemma \ref{criteria for T
set} we have
$$
\{x\in c_0: t\not \in T(M_x)\}=\bigcap_{K\in\N} A_K
$$
so it suffices to show that $\bigcap_{K\in \N} A_K$ is dense. Let
$y\in c_0$ and $\varepsilon>0$. Pick $j_0\in \N$ such that for all
$j>j_0$,
$$
|y(j)|<\frac \varepsilon 2
$$
and
$$\frac {1}{t\ln(2)\sqrt{j_0}}<\frac \varepsilon 2\,.
$$
For $j\leq j_0$ define $x(j)=y(j)$. For $j>j_0$ define
$x(j)=j!\ln\left(1+\frac{a(j)}{t\ln(2)j!\sqrt{j}}\right)$, where
$a(j)$ is defined according to the following rule:
$$a(j)=
\begin{cases}
0 & \text{ if }\,\, |t\ln(2)j! \imod{2\pi}|\geq\frac{1}{2\sqrt{j}}\\
1 & \text{ if }\,\, |t\ln(2)j! \imod{2\pi}|<\frac{1}{2\sqrt{j}}.
\end{cases}
$$
It is clear that if $j>j_0$ then
$$
0\leq x(j)\leq
j!\frac{a(j)}{t\ln(2)j!\sqrt{j}}\leq\frac{1}{t\ln(2)\sqrt{j}}<\frac
{\varepsilon} 2,
$$
so $x\in c_0$ and $\|x-y\|_\infty<\varepsilon$. On the other hand we
have that
$$
t\l_j^x=t\ln(2)j!e^{x(j)/j!}=t\ln(2)j!\left(1+\frac{a(j)}{t\ln(2)j!\sqrt{j}}\right)=t\ln(2)j!+\frac{a(j)}{\sqrt
j}.
$$
By the choice of $a(j)$ we have $|\delta_j^x|=|tl_j^x\imod
{2\pi}|\geq\frac{1}{2\sqrt{j}}$. It follows that
$$
\sum_{j=1}^\infty
N_je^{-l_j^x}(\delta_j^x)^2\geq\sum_{j=j_0+1}^\infty
2^{j!(1-e^{x(j)/j!})}\left(\frac{1}{2\sqrt j}\right)^2=\infty,
$$
which shows that $x\in \bigcap_{K\in\N} A_K$.
\end{proof}

Recall that the equivalence relation $\sim_{\iso}$ in $c_0$ is
defined by $x\sim_{\iso} x'\iff M_x\simeq M_{x'}$. For $x\in c_0$
let $[x]_{\sim_{\iso}}=\{y\in c_0: y\sim_{\iso} x\}$.

\begin{lemma}
For each $x\in c_0$, $[x]_{\sim_{\iso}}$ is meagre.\label{meagre}
\end{lemma}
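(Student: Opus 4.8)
The plan is to use that Connes' $T$-set is an isomorphism invariant of von Neumann algebras — this is exactly what the non-commutative Radon-Nikodym theorem recalled above guarantees, since any isomorphism carries a faithful semifinite normal weight to one and conjugates the corresponding modular automorphism groups. Thus $M_x\simeq M_{x'}$ forces $T(M_x)=T(M_{x'})$, and so for any $x\in c_0$ and any $t\in T(M_x)$ one has the inclusion
$$
[x]_{\sim_{\iso}}\subseteq\{x'\in c_0: t\in T(M_{x'})\}.
$$

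First I would apply Lemma \ref{nonempty} to fix a single $t\in T(M_x)$ with $t\neq 0$. This is the only place where the specific construction of the family $(M_x)_{x\in c_0}$ enters here: it ensures that every member of the family carries a nontrivial piece of $T$-set data that can serve as a ``test parameter.'' With this $t$ fixed, Lemma \ref{comeagre} tells us that $\{x'\in c_0: t\notin T(M_{x'})\}$ is a dense $G_\delta$, hence comeagre in $c_0$; its complement $\{x'\in c_0: t\in T(M_{x'})\}$ is therefore meagre, and by the displayed inclusion so is $[x]_{\sim_{\iso}}$.

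I do not expect any genuine obstacle beyond assembling Lemmas \ref{nonempty} and \ref{comeagre}; the analytic work has already been done there. The one point worth flagging is that the argument is not uniform in $x$ — the test parameter $t$ depends on $x$ — but since the statement only asks that each individual class be meagre, rather than that a single meagre set contain all of them, this is harmless. It is precisely this pointwise genericity of the $T$-set, to be combined later with the turbulence furnished by Lemma \ref{turbulence} for a suitable subgroup of $(c_0,+)$, that will drive the non-classifiability by countable structures.
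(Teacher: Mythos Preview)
Your argument is correct and is exactly the paper's own proof: pick $t_0\in T(M_x)\setminus\{0\}$ via Lemma~\ref{nonempty}, use that the $T$-set is an isomorphism invariant to get $[x]_{\sim_{\iso}}\subseteq\{y:t_0\in T(M_y)\}$, and conclude meagreness from Lemma~\ref{comeagre}. Your additional remarks on the non-uniformity of $t$ in $x$ and on how this feeds into the later turbulence argument are accurate and do not affect the proof.
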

\begin{proof} By Lemma \ref{nonempty} there exists $t_0\in
T(M_x)\setminus\{0\}$. Then
$$
[x]_{\sim_{\iso}}\subseteq \{y\in c_0: t_0\in T(M_y)\}
$$
and so $[x]_{\sim_{\iso}}$ is meagre by Lemma \ref{comeagre}.
\end{proof}

\begin{remark}
It will be shown below that, in a precise way, $x\mapsto M_x$ is
Borel. By (the proof of) \cite[Theorem 2.2]{effros66} the
isomorphism relation $\simeq$ is analytic (see also \cite[Corollary
15]{sato09a}.) It follows that $\sim_{\iso}$ is analytic, and so by
the Kuratowski-Ulam Theorem \cite[8.41]{kechris95} we get from Lemma
\ref{meagre} that $\sim_{\iso}$ is meagre as a subset of $c_0\times
c_0$.
\end{remark}

We will need the following fact, a proof of which may be found in
\cite[Lemma 2.13]{arwo68}, see also \cite[Lemme
1.3.8]{connesthesis}.

\begin{prop}
If $M_1$ and $M_2$ are $\itpf1_2$ factors with eigenvalue lists
$(\lambda_{n,1},1-\lambda_{n,1})_{n\in\N}$ and
$(\lambda_{n,2},1-\lambda_{n,2})_{n\in\N}$, respectively, and
$$
\sum_{n=1}^\infty \left( (\lambda_{n,1})^{\frac 1
2}-(\lambda_{n,2})^{\frac 1 2} \right)^2+\left
((1-\lambda_{n,1})^{\frac 1 2}-(1-\lambda_{n,2})^{\frac 1
2}\right)^2<\infty,
$$
then $M_1$ and $M_2$ are unitarily isomorphic (and so they are
isomorphic.)\label{unitary}
\end{prop}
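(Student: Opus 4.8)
The plan is to realize $M_1$ and $M_2$ concretely via the GNS construction applied to their defining infinite product states, and then to appeal to von Neumann's criterion describing when two incomplete infinite tensor product Hilbert spaces coincide.

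First I would put the defining states in diagonal form. An $\itpf1_2$ factor with a prescribed eigenvalue list may be realized as the infinite tensor product of \emph{diagonal} states carrying the corresponding weights (the density matrix of a faithful normal state on $M_2(\C)$ is positive and invertible, hence unitarily diagonalizable, and a slotwise conjugation of this kind does not change the isomorphism type of the tensor product), so we may assume $M_i=\bigotimes_n(M_2(\C),\phi_{n,i})$ where $\phi_{n,i}$ has density matrix $\mathrm{diag}(\lambda_{n,i},1-\lambda_{n,i})$. Performing the GNS construction slotwise, $(M_2(\C),\phi_{n,i})$ acts on $K_n=M_2(\C)$ with the Hilbert--Schmidt inner product $\langle a,b\rangle=\Tr(b^{*}a)$ by left multiplication, with cyclic and separating vector $\xi_{n,i}=\mathrm{diag}(\sqrt{\lambda_{n,i}},\sqrt{1-\lambda_{n,i}})$, the positive square root of the density matrix. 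Forming the incomplete infinite tensor product along the reference sequence $(\xi_{n,i})_n$ produces a Hilbert space $H_i=\bigotimes_n(K_n,\xi_{n,i})$ with distinguished cyclic vector $\Omega_i=\bigotimes_n\xi_{n,i}$, and $M_i$ is canonically isomorphic to the weak closure in $B(H_i)$ of the algebraic infinite tensor product $\bigotimes^{\mathrm{alg}}_n M_2(\C)$ acting slotwise.

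The crucial point is then a one-line computation: the ambient spaces $K_n$ and the $M_2(\C)$-actions on them do not depend on $i$, the $\xi_{n,i}$ are unit vectors, and
\[
\|\xi_{n,1}-\xi_{n,2}\|^{2}=2-2\langle\xi_{n,1},\xi_{n,2}\rangle=\big(\sqrt{\lambda_{n,1}}-\sqrt{\lambda_{n,2}}\big)^{2}+\big(\sqrt{1-\lambda_{n,1}}-\sqrt{1-\lambda_{n,2}}\big)^{2},
\]
since $\langle\xi_{n,1},\xi_{n,2}\rangle=\sqrt{\lambda_{n,1}\lambda_{n,2}}+\sqrt{(1-\lambda_{n,1})(1-\lambda_{n,2})}$ is a nonnegative real. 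Thus the hypothesis says exactly that $\sum_n\|\xi_{n,1}-\xi_{n,2}\|^{2}<\infty$, equivalently $\sum_n|1-\langle\xi_{n,1},\xi_{n,2}\rangle|<\infty$; and since every inner product appearing here is a positive real, there are no phase factors to reconcile.

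Finally I would invoke von Neumann's theorem on incomplete infinite tensor products (see, e.g., \cite{blackadar06}): when $\sum_n|1-\langle\xi_{n,1},\xi_{n,2}\rangle|<\infty$ the reference sequences $(\xi_{n,1})_n$ and $(\xi_{n,2})_n$ are equivalent, so the natural identification of elementary tensors extends to a unitary $U\colon H_1\to H_2$ intertwining the two slotwise representations of $\bigotimes^{\mathrm{alg}}_n M_2(\C)$ (and carrying $\Omega_1$ to $\Omega_2$). Passing to weak closures gives $UM_1U^{*}=M_2$, so $M_1$ and $M_2$ are unitarily isomorphic. I expect the only genuine obstacle to be keeping the incomplete tensor product formalism straight --- in particular verifying that the phase-free form of the hypothesis yields true equivalence of the reference sequences (hence a spatial, unitary isomorphism) rather than merely the weak equivalence that would produce only an abstract isomorphism; once that is in place, the rest is bookkeeping.
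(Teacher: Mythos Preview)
Your argument is correct and is precisely the standard route: realize each factor via GNS on the Hilbert--Schmidt completion, note that the cyclic vectors are the positive square roots of the density matrices, compute $\|\xi_{n,1}-\xi_{n,2}\|^2$ to match the hypothesis with von Neumann's summability criterion for equivalence of $C_0$-sequences, and observe that positivity of the inner products kills any phase obstruction so that one obtains genuine (not merely weak) equivalence and hence a spatial unitary intertwining the two representations.

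The paper itself does not supply a proof of this proposition: it is quoted as a known fact with references to \cite[Lemma 2.13]{arwo68} and \cite[Lemme 1.3.8]{connesthesis}. Your write-up is essentially a reconstruction of the Araki--Woods argument behind those citations, so there is nothing to compare beyond noting that you have filled in what the paper deliberately left as a black box.
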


\begin{remark}
Denote by $c_{00}\subseteq c_0$ the set of all eventually zero
sequences. Then $c_{00}$ acts continuously on $c_0$ by addition. By
Proposition \ref{unitary} it follows easily that if $g\in c_{00}$
then
$$
M_{g+x}\simeq M_x
$$
for all $x\in c_0$. Thus the action of $c_{00}$ on $c_0$ preserves
$\sim_{\iso}$. Since $\sim_{\iso}$ is meagre in $c_0\times c_0$ and
clearly $c_{00}$-orbits are dense, we can now apply \cite[Theorem
3.4.5]{beckerkechris}, by which it follows that $E_0\leq_B
\sim_{\iso}$. Here $E_0$ denotes the equivalence relation in
$\{0,1\}^\N$ defined by
$$
x E_0 y\iff (\exists N)(\forall n\geq N) x(n)=y(n).
$$
Below we will show that the assignment $x\mapsto M_x$ is Borel, and
so it follows that $E_0$ is Borel reducible to isomorphism of
$\itpf1_2$ factors. Since $E_0$ is not smooth this provides a new
proof of the following:
\end{remark}

\noindent {\bf Theorem}\  (Woods, \cite{woods73}). \emph{ $E_0$ is Borel reducible to
isomorphism of $\itpf1_2$ factors. In particular
the isomorphism relation for $\itpf1_2$ factors is not smooth.}

\medskip

Arguably the proof exhibited here is simpler than the argument given in \cite{woods73}, partly because we
avoid to construct an explicit Borel reduction from $E_0$ to isomorphism of $\itpf1_2$ factors that made
Woods' original proof quite involved.
Observe that since $c_{00}$ doesn't admit a Polish group structure,
Hjorth's theory of turbulence does not apply to its actions. In what
follows we will overcome this difficulty by defining a group $G$
that can play the role of $c_{00}$, but which is also Polish.
Specifically, consider the set
$$
G=\{a\in c_0: \sum_{j=1}^\infty 2^{j!}a(j)^2<\infty \}.
$$
The set $G$ becomes a separable real Hilbert space when equipped
with the inner product given by $\langle
a,b\rangle=\sum_{j=1}^\infty 2^{j!}a(j)b(j)$. Since $c_{00}\subset
G$, it follows that $G$ is dense in $c_0$. Moreover, since $G\neq
c_0$, by Lemma \ref{turbulence} the action of $G$ on $c_0$ by
addition is turbulent, has meagre classes and all the classes are
dense. The following lemma shows that the $G$-action on $c_0$
preserves the $\sim_{\iso}$ classes:
\begin{lemma}\label{preserves orbits}
If $a\in G$, then $M_x$ is unitarily equivalent to $M_{a+x}$. In
particular, $x\sim_{\iso} (a+x)$.
 \end{lemma}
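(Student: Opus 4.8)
The plan is to apply Proposition~\ref{unitary} to $M_1 = M_x$ and $M_2 = M_{a+x}$, so the whole task reduces to verifying that the series
$$
\sum_{n=1}^\infty \Big( (\lambda_n^x)^{\frac12} - (\lambda_n^{a+x})^{\frac12}\Big)^2 + \Big( (1-\lambda_n^x)^{\frac12} - (1-\lambda_n^{a+x})^{\frac12}\Big)^2
$$
converges. First I would rewrite this in terms of the block index $j$: since $\lambda_n^x$ is constant and equal to $1/(1+e^{-l_j^x})$ across the $j$-th block of $N_j$ indices, the sum over $n$ becomes $\sum_{j=1}^\infty N_j\, d_j$, where $d_j$ is the single-matrix contribution
$$
d_j = \Big( (\tfrac{1}{1+e^{-l_j^x}})^{\frac12} - (\tfrac{1}{1+e^{-l_j^{a+x}}})^{\frac12}\Big)^2 + \Big( (\tfrac{e^{-l_j^x}}{1+e^{-l_j^x}})^{\frac12} - (\tfrac{e^{-l_j^{a+x}}}{1+e^{-l_j^{a+x}}})^{\frac12}\Big)^2.
$$
So I need to estimate $d_j$ in terms of the difference $l_j^{a+x} - l_j^x$ and then show $\sum_j N_j d_j < \infty$ using the defining property $\sum_j 2^{j!} a(j)^2 < \infty$ of $G$.

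The key computation is a first-order estimate on $d_j$. Writing $l = l_j^x$ and $l' = l_j^{a+x}$, and recalling $l_j^x = \ln(2)\,j!\,e^{x(j)/j!}$, we have
$$
l' - l = \ln(2)\,j!\,\big(e^{(x(j)+a(j))/j!} - e^{x(j)/j!}\big) = \ln(2)\,j!\,e^{x(j)/j!}\big(e^{a(j)/j!} - 1\big).
$$
Since $a\in c_0$ and $j!\to\infty$, the quantity $a(j)/j!\to 0$, so $e^{a(j)/j!}-1 = a(j)/j! + O((a(j)/j!)^2)$, and since $e^{x(j)/j!}$ is bounded (as $x\in c_0$), we get $|l'-l| \leq C\,|a(j)|$ for large $j$ with $C$ absolute; more precisely $l'-l = \ln(2)\, a(j) + o(a(j))$. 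Next I would note that the function $l\mapsto \big((1+e^{-l})^{-1/2}, (e^{-l}/(1+e^{-l}))^{1/2}\big)$ is smooth with bounded derivative (its image lies on the unit circle and the map is Lipschitz in $l$ uniformly on $[0,\infty)$, in fact with derivative decaying like $e^{-l/2}$), so $d_j \leq C'\,(l'-l)^2 \leq C''\, a(j)^2$ for large $j$. This already suffices if we only needed $\sum_j N_j a(j)^2$ with $N_j = 2^{j!}$ bounded, but $N_j$ blows up, so I need the sharper bound $d_j \leq C''\, e^{-l_j^x}(l'-l)^2$ coming from the exponential decay of the derivative; then $N_j d_j \leq C''\, N_j e^{-l_j^x}\, (\ln 2)^2 a(j)^2 (1+o(1))$, and since $N_j e^{-l_j^x} = 2^{j!(1-e^{x(j)/j!})}\to 1$ (Remark~\ref{remark-t-set}), this is comparable to $2^{j!} a(j)^2$ up to a bounded factor, so $\sum_j N_j d_j \leq C'''\sum_j 2^{j!} a(j)^2 < \infty$.

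The main obstacle is getting the exponential weight $e^{-l_j^x}$ to appear in the estimate for $d_j$ rather than just a bounded constant — a naive Lipschitz bound gives $d_j \leq C(l'-l)^2$, which is not enough because $N_j\to\infty$. The resolution is to compute the derivative of the map $l \mapsto ((1+e^{-l})^{-1/2}, (e^{-l}/(1+e^{-l}))^{1/2})$ explicitly and observe that each component's derivative is $O(e^{-l/2})$ as $l\to\infty$ (the point is moving slower and slower along the unit circle as $l\to\infty$ toward $(1,0)$), so the squared displacement $d_j$ picks up a factor $e^{-l_j^x}$. Combined with $N_j e^{-l_j^x}\to 1$ and $|l_j^{a+x}-l_j^x| = O(|a(j)|)$, this closes the estimate. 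The final clause "$x\sim_{\iso}(a+x)$" is then immediate since unitary equivalence implies isomorphism.
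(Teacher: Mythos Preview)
Your strategy matches the paper's: apply Proposition~\ref{unitary}, collapse the series into $\sum_j N_j d_j$ over blocks, bound $d_j$ by a Lipschitz estimate on $l\mapsto\big((1+e^{-l})^{-1/2},(e^{-l}/(1+e^{-l}))^{1/2}\big)$, and then use $a\in G$. The argument is ultimately correct.

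However, what you flag as the ``main obstacle'' rests on a confusion, and the detour you take to avoid it is unnecessary. The naive Lipschitz bound $d_j\le C(l_j^{a+x}-l_j^x)^2\le C'\,a(j)^2$ that you dismiss is exactly what the paper uses (it simply notes $|f'|,|h'|\le 1$ on $(0,\infty)$), and it \emph{is} sufficient: $\sum_j N_j\,a(j)^2=\sum_j 2^{j!}a(j)^2<\infty$ is precisely the defining condition of $G$, so ``$N_j$ blows up'' is not a problem at all. Your sharper bound $d_j\le C\,e^{-l_j^x}(l'-l)^2$, coming from the $O(e^{-l/2})$ decay of the derivatives, is valid and yields the even smaller estimate $N_j d_j=O(a(j)^2)$ (since $N_je^{-l_j^x}\to 1$), but your sentence that this is ``comparable to $2^{j!}a(j)^2$'' is off --- it is comparable to $a(j)^2$, which is of course still summable a fortiori.
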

 \begin{proof}
By Proposition \ref{unitary} it is enough to check that the sum
$$\sum_{j=1}^\infty N_j\Big\{\Big [\Big(\frac{1}{1+e^{-l_j^x}}\Big)^{\frac 1 2}-\Big(\frac{1}{1+e^{-l_j^{a+x}}}\Big)^{\frac 1 2}\Big]^2+
\Big[\Big(\frac{e^{-l_j^x}}{1+e^{-l_j^x}}\Big)^{\frac 1
2}-\Big(\frac{e^{-l_j^{a+x}}}{1+e^{-l_j^{a+x}}}\Big)^{\frac 1
2}\Big]^2\Big\}$$ is finite. Since the derivatives of the functions
$f(s)=(1+e^{-s})^{-\frac 1 2}$ and
$h(s)=\big(\frac{e^{-s}}{1+e^{-s}}\big)^{\frac 1 2}$ are bounded by
1 whenever $s>0$, the previous sum is bounded by
\begin{align*}
\sum_{j=1}^\infty 2N_j[l_j^x- l_j^{a+x}]^2 &=
 2\ln^2(2)\sum_{j=1}^\infty 2^{j!}(j!)^2e^{2x(j)/j!}[1-e^{a(j)/j!}]^2\\
 &< K\sum_{j=1}^\infty 2^{j!}(j!)^2[(a(j)/j!)^2+O(a(j)/j!)^3)]\\
&< \tilde K\sum_{j=1}^\infty 2^{j!}a(j)^2
\end{align*}
for appropriate constants $K$ and $\tilde K$, and this is finite
whenever $a\in G$.
\end{proof}

\begin{theorem} The equivalence relation $\sim_{\iso}$ is
generically $S_\infty$-ergodic, and $\sim_{\iso}$ is not
classifiable by countable structures.\label{sinftyergodic}
\end{theorem}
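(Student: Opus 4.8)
The plan is to combine the pieces that have been assembled in this section with Hjorth's fundamental theorem on turbulence. First I would recall the setup: by Lemma~\ref{turbulence} applied to the dense Hilbert subspace $G\subseteq c_0$ (which is a proper subspace, since e.g. the sequence $a(j)=2^{-j!/2}/j$ lies in $c_0$ but not in $G$), the additive action of $G$ on $c_0$ is turbulent with meagre dense orbits. By Hjorth's theorem \cite[Theorem 3.18]{hjorth00}, the orbit equivalence relation $E_G^{c_0}$ is therefore generically $S_\infty$-ergodic. By Lemma~\ref{preserves orbits}, every $G$-orbit is contained in a single $\sim_{\iso}$-class, so $E_G^{c_0}\subseteq\;\sim_{\iso}$; and as noted in Section~2 (the remark that generic $S_\infty$-ergodicity is inherited by larger equivalence relations), it follows that $\sim_{\iso}$ is generically $S_\infty$-ergodic.

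Next I would derive non-classifiability by countable structures from generic $S_\infty$-ergodicity together with the fact that $\sim_{\iso}$ has meagre classes (Lemma~\ref{meagre}). Suppose toward a contradiction that $f:c_0\to\Mod(\mathcal L)$ is a Borel (hence Baire measurable) reduction of $\sim_{\iso}$ to $\simeq^{\Mod(\mathcal L)}$. Since $\simeq^{\Mod(\mathcal L)}$ is the orbit equivalence relation of a continuous $S_\infty$-action on the Polish space $\Mod(\mathcal L)$, and $f$ is in particular a homomorphism from $\sim_{\iso}$ to that orbit relation, generic $S_\infty$-ergodicity yields a single $S_\infty$-orbit $[y]$ such that $\{x\in c_0 : f(x)\in[y]\}$ is comeagre. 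But because $f$ is a \emph{reduction}, the preimage $\{x : f(x)\in[y]\}$ is exactly one $\sim_{\iso}$-class, which is meagre by Lemma~\ref{meagre}. A comeagre set cannot be meagre (as $c_0$ is a nonempty Polish, hence Baire, space), giving the desired contradiction. Hence no such $f$ exists, i.e.\ $\sim_{\iso}$ is not classifiable by countable structures.

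I do not anticipate a serious obstacle here: all the hard analytic work has already been carried out in the preceding lemmas (the $T$-set computations establishing meagre classes, and the estimate in Lemma~\ref{preserves orbits} establishing that $G$ preserves $\sim_{\iso}$). The only point requiring a small amount of care is the verification that $G\neq c_0$, so that Lemma~\ref{turbulence} genuinely applies; this is immediate from a growth-rate comparison, since membership in $G$ forces $a(j)=o(2^{-j!/2})$ whereas membership in $c_0$ only forces $a(j)\to0$. The remaining steps are a routine assembly of Hjorth's theorem, the monotonicity of generic $S_\infty$-ergodicity under enlarging the equivalence relation, and the Baire-category incompatibility of ``comeagre'' with ``meagre.''
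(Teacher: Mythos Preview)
Your proof is correct and follows essentially the same route as the paper's: turbulence of the $G$-action on $c_0$ (via Lemma~\ref{turbulence}) plus Hjorth's theorem gives generic $S_\infty$-ergodicity of $E_G^{c_0}$, this passes to the coarser relation $\sim_{\iso}$ by Lemma~\ref{preserves orbits}, and then meagreness of the $\sim_{\iso}$-classes (Lemma~\ref{meagre}) blocks any Borel reduction to an $S_\infty$-orbit relation.

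One small slip worth fixing: your explicit witness $a(j)=2^{-j!/2}/j$ for $G\neq c_0$ actually \emph{lies in} $G$, since $\sum_j 2^{j!}a(j)^2=\sum_j 1/j^2<\infty$. (Your own necessary condition $a(j)=o(2^{-j!/2})$ is satisfied by this sequence, which should have been a warning sign.) Any sequence in $c_0$ not decaying like $2^{-j!/2}$---for instance $a(j)=1/j$---works instead; the paper itself simply asserts $G\neq c_0$ without exhibiting an element.
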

\begin{proof}
Let $G$ be as above, and let $E_G^{c_0}$ denote the orbit
equivalence relation induced by the action of $G$ on $c_0$. Then by
Lemma \ref{preserves orbits} we have $E_G^{c_0}\subseteq
\sim_{\iso}$. Since $G$ acts turbulently, it follows by
\cite[Theorem 3.18]{hjorth00} that $E_G^{c_0}$ is generically
$S_\infty$-ergodic, and so as noted in the discussion of
$S_\infty$-ergodicity in \S 2, $\sim_{\iso}$ is generically
$S_\infty$-ergodic.

Suppose, seeking a contradiction, that $S_\infty$ acts continuously on the
Polish space $Y$ and
$$
\sim_{\iso}\leq_B E_{S_\infty}^Y.
$$
If $f:c_0\to Y$ were a Borel reduction witnessing this then $f$
would map a comeagre set in $c_0$ to the same $S_\infty$-class. But
this would contradict that all $\sim_{\iso}$ classes are meagre by
Lemma \ref{meagre}, and so $f$ can't be a reduction. Hence
$\sim_{\iso}$ is not classifiable by countable structures.
\end{proof}

\begin{remark}
It follows from Theorem \ref{sinftyergodic} that the set
$$
\{x\in c_0: T(M_x) \text{ is uncountable}\}
$$
is comeagre in $c_0$, since otherwise the assignment $x\mapsto
T(M_x)$ would give an $\sim_{\iso}$-invariant assignment of
countable subsets of $\R$ on a comeagre set, and so by \cite[Lemma
3.14]{hjorth00} the function $x\mapsto T(M_x)$ would be constant on
a comeagre set. But this contradicts Lemma \ref{nonempty} and
\ref{comeagre}.

It follows from the above and Lemma \ref{nonempty} that
$$
\{x\in c_0: M_x\text{ is of type } \III_0\}
$$
is comeagre, as it should be, since $\itpf1$ factors of type
$\III_\lambda$, $0<\lambda\leq 1$, are classified by a single real
number $\lambda$. It actually follows from \cite[Proposition
1.3]{gioskan} that for all $x\in c_0$, $T(M_x)$ is uncountable, thus
$M_x$ is of type $\III_0$, but using an entirely different line of
argument.
\end{remark}

Recall from \cite{sato09a} and \cite{sato09b} that if $\mathcal H$
is a separable complex Hilbert space, then $\vN(\mathcal H)$ denotes
the standard Borel space of von Neumann algebras acting on $\mathcal
H$, equipped with the Effros Borel structure originally introduced
in \cite{effros65} and \cite{effros66}. Let $\simeq^{\vN(\mathcal
H)}$ denote the isomorphism relation in $\vN(\mathcal H)$.

\begin{theorem}
The isomorphism relation for $\itpf1_2$ factors is not classifiable
by countable structures.\label{mainthm2}
\end{theorem}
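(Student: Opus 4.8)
The plan is to derive the theorem from Theorem~\ref{sinftyergodic} by showing that the assignment $x\mapsto M_x$ can be made Borel ``in a precise way''. Specifically, I would fix once and for all a separable infinite-dimensional Hilbert space $\mathcal H$ and construct a Borel map $\Phi\colon c_0\to\vN(\mathcal H)$, $x\mapsto M_x^{\mathcal H}$, such that $M_x^{\mathcal H}$ is isomorphic to $M_x$ for every $x\in c_0$. Granting this, $\Phi$ is a Borel reduction of $\sim_{\iso}$ to $\simeq^{\vN(\mathcal H)}$ whose range consists entirely of $\itpf1_2$ factors; so if the isomorphism relation restricted to $\itpf1_2$ factors were Borel reducible to $\simeq^{\Mod(\mathcal L)}$ for some countable language $\mathcal L$, composition with $\Phi$ would yield $\sim_{\iso}\leq_B\simeq^{\Mod(\mathcal L)}$, contradicting Theorem~\ref{sinftyergodic}. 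Thus the theorem follows as soon as $\Phi$ is produced.

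For the construction I would rely on the descriptive set theory of $\vN(\mathcal H)$ developed in \cite{sato09a} and \cite{sato09b}: to check that $x\mapsto M_x^{\mathcal H}$ is Borel it suffices to produce, for each $x$, a countable family $(T^x_n)_{n\in\N}$ of contractions on $\mathcal H$ which generates $M_x^{\mathcal H}$ as a von Neumann algebra and for which each map $x\mapsto T^x_n$ is Borel with respect to the weak operator Borel structure on the unit ball, because the assignment sending a sequence of operators to the von Neumann algebra it generates is itself Borel into the Effros Borel structure. Hence everything reduces to realizing the $M_x$ simultaneously on a fixed $\mathcal H$ with generators that vary in a Borel way with $x$.

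This I would do by carrying out the GNS construction of each matricial factor explicitly. For a faithful state $\psi$ on $M_2(\C)$ with diagonal density matrix $\rho$, the GNS space may be taken to be $M_2(\C)$ with the inner product $\langle a,b\rangle=\Tr(b^*a\rho)$; the matrix units normalized by $\sqrt{\rho_{ll}}$ form an orthonormal basis $(\xi_{kl})$ in which the left GNS representation of $M_2(\C)$ is the amplification $c\mapsto c\otimes 1$, which is \emph{independent of the state} $\psi$, while the cyclic vector equals $\sqrt{\rho_{11}}\,\xi_{11}+\sqrt{\rho_{22}}\,\xi_{22}$ and depends continuously on $\rho$. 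Applied to each of the states $\phi_j^x$ (with multiplicity $N_j$), this presents $M_x$ on the infinite tensor product $\bigotimes_n(\C^4,\Omega_n^x)$, where the representation on each leg is the fixed amplification and only the unit vectors $\Omega_n^x=\sqrt{\lambda_n^x}\,\xi_{11}+\sqrt{1-\lambda_n^x}\,\xi_{22}$ depend on $x$, and do so continuously. Choosing unitaries $u_n^x$ of $\C^4$ that send $\Omega_n^x$ to a fixed unit vector $e$ and depend continuously on $x$, the product $\bigotimes_n u_n^x$ identifies $\bigotimes_n(\C^4,\Omega_n^x)$ unitarily with the fixed Hilbert space $\mathcal H=\bigotimes_n(\C^4,e)$ and carries $M_x$ onto a copy $M_x^{\mathcal H}$ generated by the countable family of contractions $u_n^x(e_{ab}\otimes 1)(u_n^x)^*$ (amplified to act on the $n$-th leg of $\mathcal H$), which depend continuously, hence Borel-measurably, on $x$. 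This yields $\Phi$ and completes the argument.

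The part I expect to take most care is this last one: verifying that the $u_n^x$ really assemble into a unitary between the infinite tensor products (this is where the condition $u_n^x\Omega_n^x=e$ is used), that the von Neumann algebra generated on the fixed $\mathcal H$ by the transported generators is exactly $M_x^{\mathcal H}$ and not something larger, and that all the maps in sight are genuinely Borel (indeed continuous) and land in the appropriate standard Borel spaces. Everything else is formal: the reduction property of $\Phi$ is immediate from $M_x^{\mathcal H}\cong M_x$, and the non-classifiability then follows from Theorem~\ref{sinftyergodic} together with the downward closure of ``classifiable by countable structures'' under Borel reducibility.
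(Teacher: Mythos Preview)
Your proposal is correct and follows essentially the same route as the paper: reduce to Theorem~\ref{sinftyergodic} by exhibiting a Borel map $c_0\to\vN(\mathcal H)$ sending $x$ to a concrete copy of $M_x$, built from the GNS picture of each $(M_2(\C),\phi_j^x)$. The only stylistic difference is that the paper packages the ``transport to a fixed Hilbert space'' step into a general coding lemma (identify each $H_x$ with $\ell^2(\N)$ via a Borel-varying orthonormal basis and invoke \cite{effros65}), whereas you do it by hand with leg-wise unitaries $u_n^x$ satisfying $u_n^x\Omega_n^x=e$; these are two phrasings of the same construction.
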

\begin{proof}
It suffices to show that there is a Borel function
$f:c_0\to\vN(\ell^2(\N))$ such that for all $x\in c_0$ we have
$f(x)\simeq M_x$, since then by Theorem \ref{sinftyergodic} it
follows that $\sim_{\iso}\leq_B \simeq^{\vN(\ell^2(\N))}$. That such
a function $f$ exists follows from the next three lemmas.
\end{proof}

\begin{lemma}
Suppose $X$ is a standard Borel space and $(H_x:x\in X)$ is a family
of infinite dimensional separable Hilbert spaces, and that
$(e_n^x)_{n\in\N}$ is an orthonormal basis of $H_x$ for each $x\in
X$. Suppose further that $Y$ is a standard Borel space space and
$(T_y^x:x\in X,y\in Y)$ is a family of operators such that
$T_y^x\in\mathcal B(H_x)$ for all $y\in Y$, $x\in X$ and that the
functions
$$
X\times Y\to\C: (x,y)\mapsto \langle T^x_y e_n^x, e_m^x\rangle
$$
are Borel for all $n,m$. Then there is a Borel function
$\theta:X\times Y\to\mathcal B(\ell^2(\N))$ and a family
$(\varphi_x: x\in X)$ such that
\begin{enumerate}
\item $\varphi_x\in \mathcal B(H_x, \ell^2(\N))$ satisfies $\varphi_x(e_n^x)=e_n$, where $(e_n)_{n\in\N}$ is the standard basis for $\ell^2(\N)$.
\item For all $x\in X$, $y\in Y$ and $\xi\in H_x$ we have $\theta(x,y)(\varphi_x(\xi))=\varphi_x(T_y^x(\xi))$
\end{enumerate}
Moreover, if $M_x$ is the von Neumann algebra generated by the
family $(T_y^x:y\in Y)$, and there are Borel functions
$$
\psi_n: X\to Y
$$
such that $(T_{\psi_n(x)}^x:n\in\N)$ generates $M_x$ for each $x\in
X$, then there is a Borel function $\hat\theta:X\to \vN(\ell^2(\N))$
such that $\hat\theta(x)\simeq M_x$ for all $x\in
X$.\label{codinglemma}
\end{lemma}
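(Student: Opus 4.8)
The plan is to push everything onto the fixed Hilbert space $\ell^2(\N)$ via the canonical unitaries and then verify that all the maps in sight are Borel for the weak-operator Borel structure. First I would construct, for each $x\in X$, the bounded linear operator $\varphi_x\colon H_x\to\ell^2(\N)$ determined by $\varphi_x(e_n^x)=e_n$; since $(e_n^x)_n$ and $(e_n)_n$ are orthonormal bases, $\varphi_x$ extends to a surjective isometry, hence a unitary, which is exactly (1). Set $\theta(x,y)=\varphi_x\,T_y^x\,\varphi_x^{-1}\in\mathcal B(\ell^2(\N))$; then $\|\theta(x,y)\|=\|T_y^x\|$ and (2) holds by construction. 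To see $\theta$ is Borel I would recall the standard facts (used already in \cite{sato09a}, \cite{effros66}): $\mathcal B(\ell^2(\N))=\bigcup_{k\in\N}k\,\mathcal B(\ell^2(\N))_1$ is a standard Borel space, its Borel structure is generated by the coefficient maps $R\mapsto\langle Re_m,e_n\rangle$, and a map into $\mathcal B(\ell^2(\N))$ is Borel iff all its coefficient maps are (the norm then being automatically Borel, a countable supremum over finitely supported vectors with $(\Q+i\Q)$-coordinates). Since $\langle\theta(x,y)e_m,e_n\rangle=\langle T_y^x e_m^x,e_n^x\rangle$, the hypothesis gives Borelness of $\theta$ immediately.

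For the ``moreover'' part, assume the Borel maps $\psi_n\colon X\to Y$ given. Then each $x\mapsto\theta(x,\psi_n(x))$ is Borel as a composition. The algebraic operations on $\mathcal B(\ell^2(\N))$ — sum, scalar multiple, adjoint and product — are Borel (on matrix coefficients they act, respectively, by a finite sum, a scalar multiple, a conjugate transpose, and the convergent series $\langle RSe_i,e_j\rangle=\sum_\ell\langle Se_i,e_\ell\rangle\langle Re_\ell,e_j\rangle$). So, fixing an enumeration $p_0,p_1,\dots$ of all noncommutative $*$-polynomials with $(\Q+i\Q)$-coefficients (including the constant $1$) in variables $X_0,X_1,\dots$, the maps $D_k(x)=p_k\big((\theta(x,\psi_n(x)))_{n\in\N}\big)$ are Borel from $X$ to $\mathcal B(\ell^2(\N))$, and for each $x$ the set $\{D_k(x):k\in\N\}$ is a $*$-subring containing $1$ that is norm-dense in the $*$-algebra generated by $(\theta(x,\psi_n(x)))_n$, hence strong-operator dense in $N_x:=\varphi_x M_x\varphi_x^{-1}=W^*\big(\theta(x,\psi_n(x)):n\in\N\big)$ — using that $(T_{\psi_n(x)}^x)_n$ generates $M_x$ and that conjugation by the unitary $\varphi_x$ is a $*$-isomorphism that is a homeomorphism for the strong operator topology.

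The last step is to select, Borel in $x$, a point of $\vN(\ell^2(\N))$ equal to $N_x$, and this is the only real difficulty. I would use that $\vN(\ell^2(\N))$ is a standard Borel space and that $N\mapsto N\cap\mathcal B(\ell^2(\N))_1$ identifies it, with Borel inverse, with a Borel subset of the standard Borel space $F(\mathcal B(\ell^2(\N))_1)$ of weak-operator-closed subsets of the compact Polish unit ball (this is the content of the Effros Borel structure; see \cite{effros65}, \cite{effros66}, \cite{sato09a}). Since $x\mapsto\|D_k(x)\|$ is Borel, putting $\tilde D_k(x)=D_k(x)$ when $\|D_k(x)\|\le1$ and $\tilde D_k(x)=0$ otherwise gives Borel maps $\tilde D_k\colon X\to\mathcal B(\ell^2(\N))_1$, and by the Kaplansky density theorem (applied to the $*$-algebra generated over $\C$, plus a routine approximation to pass from complex to $(\Q+i\Q)$ coefficients) the set $\{\tilde D_k(x):k\in\N\}$ is weak-operator dense in $N_x\cap\mathcal B(\ell^2(\N))_1$. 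Hence $x\mapsto\overline{\{\tilde D_k(x):k\in\N\}}$ (weak-operator closure) is a Borel map into $F(\mathcal B(\ell^2(\N))_1)$ with value $N_x\cap\mathcal B(\ell^2(\N))_1$, since for each basic open $U$ one has $\{x:\overline{\{\tilde D_k(x):k\}}\cap U\neq\emptyset\}=\bigcup_k\{x:\tilde D_k(x)\in U\}$, which is Borel. Composing with the Borel inverse of the identification above yields the required Borel $\hat\theta\colon X\to\vN(\ell^2(\N))$ with $\hat\theta(x)=N_x\simeq M_x$. I expect the main obstacle to be exactly this passage through the Effros Borel structure together with the Kaplansky-density step that converts the unbounded generating sequence $(D_k(x))$ into a bounded one whose weak-operator closure recaptures the unit ball of $N_x$; everything preceding it is bookkeeping once one knows that matrix coefficients generate the standard Borel structure on $\mathcal B(\ell^2(\N))$ and that the algebraic operations are Borel.
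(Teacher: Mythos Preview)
Your argument is correct. The construction of $\varphi_x$ and $\theta$, together with the verification that the matrix coefficients $\langle\theta(x,y)e_m,e_n\rangle=\langle T_y^x e_m^x,e_n^x\rangle$ are Borel, is exactly what the paper does; the only cosmetic difference is that the paper phrases Borelness of $\theta$ via the Borel-graph criterion \cite[14.12]{kechris95} rather than by directly appealing to the fact that coefficient maps generate the Borel structure on $\mathcal B(\ell^2(\N))$.

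For the ``moreover'' part the approaches diverge. The paper simply observes that $x\mapsto f_n(x)=\theta(x,\psi_n(x))$ is Borel and then invokes \cite[Theorem~2]{effros65}, which says precisely that passing from a countable Borel generating sequence to the von Neumann algebra it generates is a Borel map into $\vN(\ell^2(\N))$. You instead unpack this black box: form all rational $*$-polynomials in the generators, truncate to the unit ball, apply Kaplansky density to recover $N_x\cap\mathcal B(\ell^2(\N))_1$ as a weak-operator closure, and check that taking this closure is Borel for the Effros structure on closed subsets of the Polish ball. This is exactly the content of Effros's theorem, so your route is more self-contained (and makes visible where Kaplansky density enters), while the paper's is much shorter by citing the result off the shelf. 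Both are valid; your only minor wrinkle---approximating unit-ball elements of the complex $*$-algebra by rational-coefficient polynomials of norm at most $1$---is harmless since it suffices to approximate the open unit ball, which is already strongly dense in the closed one.
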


\begin{proof}
The family $(\varphi_x:x\in X)$ is uniquely defined by (1), and
$\theta$ is uniquely defined by
$$
\theta(x,y)=T\iff (\forall n,m) \langle
Te_n,e_m\rangle_{\ell^2(\N)}=\langle T_y^x e_n^x,
e_m^x\rangle_{H_x},
$$
which also gives a Borel definition of the graph of $\theta$, so
$\theta$ is Borel by \cite[14.12]{kechris95} since $\mathcal B(\ell^2(\N))$ is a standard Borel space when given the Borel structure generated by the weak topology. If we let
$$
f_n:X\to\vN(\ell^2(\N)): x\mapsto \theta(x,\psi_n(x)).
$$
then the ``moreover'' part follows from \cite[Theorem 2]{effros65}
since $M_x$ is isomorphic to
$$
\{f_n(x): n\in\N\}''\in\vN(\ell^2(\N)).
$$
\end{proof}

\begin{lemma}
There is a Borel function $f: (0,1)^\N\to\vN(\ell^2(\N))$ such that
for all $x\in (0,1)^\N$, $f(x)$ is isomorphic to
$$
N_x=\bigotimes_{n=1}^\infty (M_2(\C), (x(n), 1-x(n)),
$$
the $\itpf1$ factor with eigenvalue list $(x(n),1-x(n))_{n\in\N}$.
\end{lemma}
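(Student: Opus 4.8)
The plan is to realize $N_x$ concretely as a von Neumann algebra acting on a Hilbert space $H_x$ obtained from the GNS construction for the states $(x(n),1-x(n))$ on $M_2(\C)$, set up this data so that it varies in a Borel (indeed continuous) way with $x\in(0,1)^\N$, and then invoke the Coding Lemma (Lemma~\ref{codinglemma}) with $X=(0,1)^\N$ to transport everything to $\vN(\ell^2(\N))$.

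First I would fix, for each $t\in(0,1)$, the $4$-dimensional Hilbert space $H_t$ whose underlying vector space is $M_2(\C)$ with inner product $\langle a,b\rangle_t=\Tr\big(\mathrm{diag}(t,1-t)\,b^{*}a\big)$, on which $M_2(\C)$ acts by left multiplication $\pi_t(a)b=ab$, with unit cyclic vector $\Omega_t=I$ satisfying $\langle\pi_t(a)\Omega_t,\Omega_t\rangle_t=\Tr\big(\mathrm{diag}(t,1-t)\,a\big)$. Since $\langle\cdot,\cdot\rangle_t$ is non-degenerate for every $t\in(0,1)$, Gram--Schmidt applied to the fixed ordered spanning list $(I,E_{12},E_{21},E_{22})$ of $M_2(\C)$ produces an orthonormal basis $e^{(t)}_1=\Omega_t,\ e^{(t)}_2,\ e^{(t)}_3,\ e^{(t)}_4$ of $H_t$ whose entries are rational, hence continuous, functions of $t$. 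Now set $H_x=\bigotimes_{n=1}^{\infty}(H_{x(n)},\Omega_{x(n)})$, the incomplete tensor product along the unit vectors $\Omega_{x(n)}$; this is an infinite-dimensional separable Hilbert space on which $N_x$ is, by construction, the von Neumann algebra generated by the coordinate embeddings of $M_2(\C)$ (see e.g.\ \cite{blackadar06}). Because $\Omega_{x(n)}=e^{(x(n))}_1$, an orthonormal basis of $H_x$ is given by the vectors $e^x_s=\bigotimes_n e^{(x(n))}_{s(n)}$, where $s$ ranges over the countable set $S$ of maps $\N\to\{1,2,3,4\}$ that are eventually equal to $1$; fixing a bijection $m\mapsto s_m$ of $\N$ with $S$, put $e^x_m=e^x_{s_m}$. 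For the operator family take $Y=\N\times\{E_{11},E_{12},E_{21},E_{22}\}$, a countable (hence standard Borel) space, and let $T^x_{(n_0,E_{k\ell})}\in\mathcal B(H_x)$ be left multiplication by $E_{k\ell}$ in the $n_0$-th tensor factor.

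The computation to check is that $(x,y)\mapsto\langle T^x_y e^x_n,e^x_m\rangle$ is Borel: since the $e^{(t)}_i$ form an orthonormal basis of $H_t$, the coefficient $\big\langle T^x_{(n_0,E_{k\ell})}e^x_m,e^x_{m'}\big\rangle$ vanishes unless $s_m$ and $s_{m'}$ agree on $\N\setminus\{n_0\}$, in which case it equals $\big\langle E_{k\ell}\,e^{(x(n_0))}_{s_m(n_0)},\,e^{(x(n_0))}_{s_{m'}(n_0)}\big\rangle_{x(n_0)}$, a continuous function of $x(n_0)$. Hence the matrix coefficients are Borel. Moreover the countable family $(T^x_y)_{y\in Y}$ generates $N_x$, since each coordinate copy of $M_2(\C)$ is generated as a von Neumann algebra already by its matrix units, so letting the $\psi_n\colon X\to Y$ be the constant functions that enumerate $Y$ fulfils the remaining hypothesis of Lemma~\ref{codinglemma}. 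The lemma then produces a Borel map $\hat\theta\colon(0,1)^\N\to\vN(\ell^2(\N))$ with $\hat\theta(x)\simeq N_x$ for every $x$, and $f=\hat\theta$ is the desired function. The only points needing care are the choice of orthonormal basis of each GNS space so that the reference vector $\Omega_{x(n)}$ is itself a basis vector and the basis varies continuously in the parameter (handled by Gram--Schmidt starting from $\Omega_{x(n)}=I$), and the bookkeeping that matrix coefficients in the incomplete tensor product collapse to a single-factor expression; I do not expect a genuine obstacle beyond this, as the argument is essentially a careful packaging of the standard GNS realization of an $\itpf1$ factor so that the Coding Lemma applies.
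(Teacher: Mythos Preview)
Your proposal is correct and follows essentially the same strategy as the paper: realize $N_x$ on the infinite tensor product GNS space with an orthonormal basis varying continuously in $x$, verify that the matrix coefficients of a generating family of operators are Borel in $(x,y)$, and then invoke the Coding Lemma. The only cosmetic differences are that the paper keeps the trace inner product on $M_2(\C)$ fixed and instead varies the reference vector to $\mathrm{diag}(\sqrt{x(n)},\sqrt{1-x(n)})$ (whereas you vary the inner product and keep the reference vector $I$), and that the paper parametrizes the operators by $Y=M_2(\C)^{<\N}$ rather than by individual matrix units in each coordinate; neither change affects the argument.
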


\begin{proof}
Let $M_2(\C)$ act on itself by multiplication. Then let
$\eta:(0,1)^\N\to (M_2(\C)^4)^\N$ be a Borel function such that
$$
\eta(x)(n)_1=\left (\begin{array}{cc}
\sqrt{x(n)} & 0\\
0 & \sqrt{1-x(n)}
\end{array}\right)
$$
and $\{\eta(x)(n)_i: i\in\{1,2,3,4\}\}$ is an orthonormal basis for
$M_2(\C)$. For each $\vec{i}\in \{1,2,3,4\}^\N$ such that
$\vec{i}(k)=1$ eventually, let
$$
e_{\vec{i}}^x= \eta(x)(1)_{\vec{i}(1)}\otimes
\eta(x)(2)_{\vec{i}(2)}\otimes\cdots\otimes
\eta(x)(n)_{\vec{i}(n)}\otimes\cdots
$$
Then $(e_{\vec{i}}^x: \vec{i}(k)=1 \text{ eventually})$ is an
orthonormal basis for the Hilbert space
$$
H_x=\bigotimes_{n=1}^\infty (M_2(\C), \eta(x)(n)_1).
$$
Let $M_2(\C)^{<\N}$ denote the set of sequences in $\vec a\in
M_2(\C)^\N$ that $\vec a(k)=I$ eventually. Then for all $\vec i,\vec
j$ the map $(0,1)^\N\times M_2(\C)^{<\N}\to\C$ given by
$$
(x,\vec a)\mapsto \langle\vec a(1)\eta(x)(1)_{\vec{i}(1)}\otimes
\vec a(2)\eta(x)(2)_{\vec{i}(2)}\otimes\cdots,\eta(x)(1)_{\vec
j(1)}\otimes\eta(x)(2)_{\vec j(2)}\otimes\cdots\rangle
$$
is continuous. Since  for each $x$ fixed, $M_2(\Q[i ])^{<\N}$
generates $N_x$ as a von Neumann algebra, the Lemma now follows from
Lemma \ref{codinglemma}.
\end{proof}

\begin{lemma}
There is a Borel function $f:c_0\to\vN(\ell^2(\N))$ such that
$f(x)\simeq M_x$.
\end{lemma}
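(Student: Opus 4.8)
The plan is to deduce this from the preceding lemma by ``unrolling'' the eigenvalue list of $M_x$ into a single sequence in $(0,1)^\N$. Recall that $M_x$ was defined to be the $\itpf1_2$ factor with eigenvalue list $(\lambda_n^x,1-\lambda_n^x)_{n\in\N}$, where $\lambda_n^x=\lambda_j^x:=(1+e^{-l_j^x})^{-1}$ whenever $\sum_{i=1}^{j-1}N_i<n\leq\sum_{i=1}^{j}N_i$; that is, each value $\lambda_j^x$ is repeated $N_j$ times in consecutive blocks. Accordingly, let $j(n)$ be the unique $j$ with $\sum_{i<j}N_i<n\leq\sum_{i\leq j}N_i$, and define $\Phi\colon c_0\to(0,1)^\N$ by $\Phi(x)(n)=\lambda_{j(n)}^x$.

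First I would check that $\Phi$ is well defined and Borel. Since $l_j^x=\ln(2)\,j!\,e^{x(j)/j!}$ is a finite real number for every $x\in c_0$, we have $e^{-l_j^x}\in(0,\infty)$ and hence $\lambda_j^x=(1+e^{-l_j^x})^{-1}\in(0,1)$, so indeed $\Phi(x)\in(0,1)^\N$. For each fixed $n$ the map $x\mapsto\Phi(x)(n)$ factors through the single coordinate $x\mapsto x(j(n))$ via the continuous function $s\mapsto(1+\exp(-\ln(2)\,j(n)!\,e^{s/j(n)!}))^{-1}$, so $\Phi$ is continuous, in particular Borel.

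Next, by the very definition of $M_x$ through its eigenvalue list, the $\itpf1_2$ factor $N_{\Phi(x)}=\bigotimes_{n=1}^\infty(M_2(\C),(\Phi(x)(n),1-\Phi(x)(n)))$ supplied by the preceding lemma has eigenvalue list $(\lambda_n^x,1-\lambda_n^x)_{n\in\N}$, and hence $N_{\Phi(x)}\simeq M_x$; indeed, grouping the tensor factors of $N_{\Phi(x)}$ into the consecutive blocks of length $N_j$ turns the defining product of $N_{\Phi(x)}$ literally into the defining product of $M_x$.

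Finally, let $f_0\colon(0,1)^\N\to\vN(\ell^2(\N))$ be the Borel function produced by the preceding lemma, so that $f_0(y)\simeq N_y$ for all $y\in(0,1)^\N$. Then $f:=f_0\circ\Phi\colon c_0\to\vN(\ell^2(\N))$ is Borel as a composition of Borel maps, and $f(x)=f_0(\Phi(x))\simeq N_{\Phi(x)}\simeq M_x$ for every $x\in c_0$, as required. Together with Lemma \ref{codinglemma} (through the two intermediate lemmas) this supplies the Borel function needed in the proof of Theorem \ref{mainthm2}. I do not anticipate any genuine obstacle here: the only points requiring care are the indexing that identifies the block structure of $M_x$ with the coordinates of $\Phi(x)$, and the observation that this identification is coordinatewise continuous in $x$, both of which are routine.
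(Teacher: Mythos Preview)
Your proof is correct and follows exactly the approach the paper takes: compose the Borel map $(0,1)^\N\to\vN(\ell^2(\N))$ from the preceding lemma with the continuous map sending $x$ to its eigenvalue list $(\lambda_n^x)_{n\in\N}$. The paper's own proof is a single sentence (``Immediate by the previous lemma, since $x\mapsto(\frac{1}{1+e^{-l_j^x}}:j\in\N)$ is continuous''), and you have simply spelled out the details that this sentence leaves implicit.
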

\begin{proof}
Immediate by the previous lemma, since
$$
x\mapsto \left(\frac 1 {1+e^{-l_j^x}}:j\in\N\right)
$$
is continuous.
\end{proof}

\begin{remarks}

1. Consider Cantor space $\{0,1\}^\N$ and the odometer action of
$\Z$ on $X=\{0,1\}^\N$ (i.e. ``adding one with carry''.) Let $z\in
(0,1)^\N$, and let $\mu^z$ be the product measure
$$
\mu^z=\prod_{n=1}^\infty (z(n)\delta_0+(1-z(n))\delta_1),
$$
where $\delta_0$ and $\delta_1$ denote the Dirac measures on
$\{0,1\}$ concentrating on $0$ and $1$, respectively. The measure
class of $\mu^z$ is preserved by the odometer action, and if $
\mu^z$ is ergodic for the odometer we let
$N_z=L^\infty(X,\mu^z)\rtimes \Z$ be the Krieger factor obtained
from the group-measure space construction. Then
$$
N_z=L^\infty(X,\mu^z)\rtimes \Z\simeq\bigotimes_{n=1}^\infty
(M_2(\C), (z(n), 1-z(n)),
$$
see \cite[III.3.2.18]{blackadar06}. By Krieger's celebrated Theorem
(\cite[8.4]{krieger76}, see also \cite[III.3.2.19]{blackadar06}) the
group measure space factors $N_z$ and $N_{z'}$ are isomorphic
precisely when the corresponding measure class-preserving odometer
actions are orbit equivalent. If we now, for each $x\in c_0$, let
$z_x(n)=\lambda_n^x$, where $(\lambda_n^x, 1-\lambda_n^x)_{n\in\N}$
is the eigenvalue list of the factor $M_x$, then since all the
factors $M_x$ are type $\III$, the measure $\mu^{z_x}$ is non-atomic
and ergodic for the odometer. Thus we obtain the following
consequence of Theorem \ref{mainthm2}:
\begin{theorem}
The odometer actions of $\Z$ on $\{0,1\}^\N$ that preserve the
measure class of some ergodic non-atomic $\mu^z$ as above, are not
classifiable up to orbit equivalence by countable structures.
\end{theorem}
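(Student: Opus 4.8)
The plan is to produce a Borel reduction of the equivalence relation $\sim_{\iso}$ on $c_0$ to orbit equivalence of the odometer actions in question, and then to quote Theorem~\ref{sinftyergodic}. First I would fix, once and for all, the odometer action $T$ of $\Z$ on $X=\{0,1\}^\N$, and parameterize the relevant non-singular systems by
$$
Z=\{z\in(0,1)^\N:\ \mu^z\text{ is non-atomic and ergodic for }T\}.
$$
Non-atomicity and ergodicity of $\mu^z$ are Borel conditions on $z$, so $Z$ is a standard Borel space. For $z,z'\in Z$ write $z\,\mathrm{OE}\,z'$ to mean that the non-singular ergodic systems $(T,\mu^z)$ and $(T,\mu^{z'})$ are orbit equivalent; this is exactly the equivalence relation whose non-classifiability by countable structures is to be proved.

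Next I would record the two facts assembled in the Remarks above. First, for $z\in Z$ the Krieger factor $N_z=L^\infty(X,\mu^z)\rtimes\Z$ is isomorphic to $\bigotimes_{n=1}^\infty(M_2(\C),(z(n),1-z(n)))$, i.e.\ to the $\itpf1_2$ factor with eigenvalue list $(z(n),1-z(n))_{n\in\N}$ (see \cite[III.3.2.18]{blackadar06}). Second, Krieger's Theorem \cite[8.4]{krieger76} says that $N_z\simeq N_{z'}$ if and only if $z\,\mathrm{OE}\,z'$. Now define $r\colon c_0\to Z$ by $r(x)=z_x$, where $z_x(n)=\lambda_n^x$ and $(\lambda_n^x,1-\lambda_n^x)_{n\in\N}$ is the eigenvalue list of $M_x$. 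The maps $x\mapsto\lambda_n^x$ are continuous, so $r$ is continuous, hence Borel; and $r$ genuinely takes values in $Z$ because every $M_x$ is of type $\III$, whence $\mu^{z_x}$ is non-atomic and ergodic. Finally $N_{z_x}\simeq M_x$, by the first fact together with the definition of $M_x$ as the $\itpf1_2$ factor with eigenvalue list $(\lambda_n^x,1-\lambda_n^x)$.

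These observations immediately give
$$
x\sim_{\iso}x'\iff M_x\simeq M_{x'}\iff N_{z_x}\simeq N_{z_{x'}}\iff z_x\,\mathrm{OE}\,z_{x'},
$$
where the middle equivalence uses $N_{z_x}\simeq M_x$ and the last is Krieger's Theorem. Hence $r$ Borel-reduces $\sim_{\iso}$ to $\mathrm{OE}$. Since classifiability by countable structures is downward closed under $\leq_B$ — if $g\colon Z\to\Mod(\mathcal L)$ were a Borel reduction of $\mathrm{OE}$ to $\simeq^{\Mod(\mathcal L)}$, then $g\circ r$ would be a Borel reduction of $\sim_{\iso}$ to $\simeq^{\Mod(\mathcal L)}$ — Theorem~\ref{sinftyergodic} forces $\mathrm{OE}$ not to be classifiable by countable structures, which is the claim.

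The only genuinely delicate point is to make the ambient framework rigorous: one must check that $Z$ really is standard Borel (as asserted above, this is because the defining conditions on $\mu^z$ are Borel in $z$) and that $\mathrm{OE}$ is being considered on an honest standard Borel space of systems, so that the phrase ``classifiable by countable structures'' applies. Since here the acting transformation is the fixed odometer $T$ and only the quasi-invariant measure $\mu^z$ varies — and varies continuously in $z$ — this presents no real difficulty, and no analysis of orbit equivalence beyond Krieger's Theorem is needed. I expect essentially all of the mathematical content to be carried by Theorem~\ref{sinftyergodic} and by Krieger's Theorem, both of which may be invoked as given.
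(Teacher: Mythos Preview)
Your argument is correct and essentially identical to the paper's: you build the Borel map $x\mapsto z_x$, use Krieger's Theorem to identify $\sim_{\iso}$ with orbit equivalence of the corresponding odometers, and conclude from the non-classifiability of $\sim_{\iso}$. The only cosmetic difference is that you invoke Theorem~\ref{sinftyergodic} directly, whereas the paper phrases it as a consequence of Theorem~\ref{mainthm2}; since the latter is proved via the former, this is immaterial.
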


2. The observation made in \cite[Corollary 8]{sato09a} is equally
pertinent to the main result of this paper: Since the proof relies
only on Baire category techniques, Theorem \ref{mainthm2} shows that
it is not possible to construct in Zermelo-Fraenkel set theory
without the Axiom of Choice a function that completely classifies
$\itpf1_2$ factors up to isomorphism by assigning countable
structures type invariants.

\medskip

3. In \cite{sato09b} we asked (Problem 4) if all possible $K_\sigma$
subgroups of $\R$ appear as the $T$-set of some $\itpf1$ factor.
Stefaan Vaes has kindly pointed out to us that this is already known
{\it not} to be the case: This follows from the results of \cite[\S
2]{homepa91}, see also \cite[\S 2]{pova09}.
\end{remarks}

\bibliographystyle{amsplain}
\bibliography{turbITPFI}

\end{document}